\def\onehalf{{\textstyle\frac12}}
\def\oneqtr{{\textstyle\frac14}}
\def\onethird{{\textstyle\frac13}}
\def\R{{\mathbb R}}
\def\E{{\mathbb E}}
\def\hook{{\mathchoice{\vrule height 0pt depth 0.4pt width 3pt
\vrule height 5pt depth 0.4pt \kern 3pt} {\vrule height 0pt depth
0.4pt width 3pt \vrule height 5pt depth 0.4pt \kern 3pt} {\vrule
height 0pt depth 0.2pt width 1.5pt \vrule height 3pt depth 0.2pt
width 0.2pt \kern 1pt} {\vrule height 0pt depth 0.2pt width 1.5pt
\vrule height 3pt depth 0.2pt width 0.2pt \kern 1pt} }}
\def\d{\mbox{d}}%new command GP
\def\tr{\text{tr}}
\def\det{\text{det}}
\def\diag{\text{diag}}
\def\dim{\text{dim}}
\def\pmr{\text{pm}}
\def\mtt#1{\mathtt{#1}}
\def\sym{\text{\tiny sym}}
\def\skew{\text{\tiny skew}}
\def\cS{\mathcal{S}}
\def\bA{\mathbf{A}}
\def\bQ{\mathbf{Q}}
\def\bb{\mathbf{b}}
\theoremstyle{plain}
\newtheorem{thm}{Theorem}[section]
\newtheorem{lem}[thm]{Lemma}
\newtheorem{propn}[thm]{Proposition}
\newtheorem{cor}[thm]{Corollary}
\theoremstyle{definition}
\newtheorem{defn}[thm]{Definition}
\newcommand{\pd}[2]{\frac{\partial#1}{\partial#2}}
\newcommand{\hnabla}{\hat{\nabla}}
\newcommand{\Sp}{\text{Sp}} %new command GP
\begin{document}
\begin{titlepage}

\title{Quadratic forms and the expansion and rotations of linear endomorphisms}

\author{Geoff Prince
\thanks{Email: {\tt g.prince\char64 latrobe.edu.au}}
\\Department of Mathematics and Statistics, La Trobe University,\\ Victoria 3086,
Australia.}
\date{July 4, 2023}

\maketitle

\begin{abstract}
 {\small New expansionary and rotational quadratic forms are constructed for $\E^n$-endomorphisms. Relations amongst the various eigenvalues, eigendirections and matrix invariants are established,
 including propositions on complexity and geometric multiplicity.
 The underlying construction involves a novel, almost-orthogonal expansion based on two-plane rotations. The development is strongly geometric in flavour and has application to the
 theory of connections, of which the Frenet case on $\E^3$ is given as a model.}
\end{abstract}

\footnotetext{ 2020 {\em  Mathematics Subject Classification.}
Primary 15A04,15A63,53A45  Secondary 15A15,15A18,53Z50}%\question{check MSC}

\footnotetext{{\it Key words and phrases.} linear operators, matrix invariants, quadratic forms,
rotation and expansion of real endomorphisms, eigenspaces, eigenvalues, Frenet frame}
\end{titlepage}
\section*{Introduction}

In the presence of a basis on $\E^n$ there are two important additive decompositions of a real $n\times n$ matrix: one into symmetric and skew-symmetric parts
and the other being via the columns of the matrix itself, each holding the unique coordinates of the images of the basis vectors under the associated linear operator.
I will identify a third, unifying decomposition creating a fundamental role for quadratic forms representing the expansion and rotations of the underlying $\E^n$-endomorphism.
The strong geometric character of this construction allows, for example,  the identification of the eigenspaces of the endomorphism as the common zero subspaces of the rotations
and their dimension as the geometric multiplicity of the eigenvalues.
In addition, the Cayley-Hamilton theorem and the Newton trace formulae can be used to construct relations between the real eigenvalues of the quadratic forms and the possibly
complex ones of the endomorphism.

The main result of this paper is then theorem \ref{thm1}: \newline
{\em
Let $A$ be a non-trivial endomorphism on $\E^n$ with orthonormal basis $\mathbf{b}$ and non-zero $u\in\E^n.$
Then
\begin{equation}
A(u)=\bA^e(\hat u)u+\sum_{\stackrel{k,l}{k<l}}\bA^r_{kl}(\hat u)R_{kl}(u).\notag
\end{equation}}
In this expression $\bA^e$ and $\bA^r_{kl}$ are the above expansionary and rotational quadratic forms and the $R_{kl}$ are quasi-rotations of the basis two-planes $\Sp\{b_k,b_l\}.$
The $\onehalf n(n-1)$ terms in the sum on the right hand side are all (possibly trivially) orthogonal to $u$ although they are clearly linearly dependent when $n>2$.

A significant motivation has been the importance of tangent space endomorphisms in differential geometry, particularly the shape maps associated with metric and other connections
as well as the tangent map associated with smooth flows. The Frenet shape map is included as a model demonstrating the relationship between the rotational quadratic forms and the
conventional constructs of torsion and curvature for $\E^3$-flows. Because of this geometric emphasis, both on the construction and in the motivation, I have only included pointers
to the full complex case.

The paper begins with a review of the known results on $\E^2$ from \cite{GP2000} and some classical results on quadratic forms.
This is followed by the introduction of the quasi-rotations and the expansionary and rotational quadratic forms on $E^n$.
The main results are developed and are followed by an exploration of the Cayley-Hamilton and Newton trace formulae.
In the last section the constructions are illustrated on $\E^3$ and then using the Frenet shape map as a specific endomorphism and example of the application to the theory of connections on manifolds.
Some useful results and identities appear in an appendix.
%\question{note re complex case}

{\em Author's note} This is an expository paper dealing with elementary concepts in linear algebra.
To the best of my knowledge this ground has not been covered before although it certainly would have been no surprise to find it in a nineteenth century treatise.
Nonetheless, I have found the development illuminating along with some insights in differential geometry; I hope it will find utility elsewhere.

\subsection*{Notation}
In what follows $A$ is a non-zero  endomorphism on $\E^n$ with matrix representation $[A]_\mathbf{b}$
relative to a basis $\mathbf{b}:=\{b_1,\dots,b_n\}$. $A^i_j$ is the element in the $i^\text{th}$ row and $j^\text{th}$ column of $[A]_\mathbf{b}$. The indices can be raised and lowered with the identity,
for example, $A_{kj}:=\sum_{i}A^i_j\delta_{ik}.$ $u \in \E^n$ is assumed non-zero with $\hat u:=u/\|u\|$. The coordinate column vector of $u$ relative to $\mathbf{b}$ is
$$
[u]_\mathbf{b}:= \left[\begin{matrix}u^1 \\ \vdots \\ u^n\end{matrix}\right]
$$
where $u=u^1b_1+\dots u^nb_n$.
The summation convention is used throughout so that, for example, $u=u^ib_i,$ and
$[A(u)]_\mathbf{b}=[A]_\mathbf{b}[u]_\mathbf{b}$ is equivalent to $A(u)^i=A^i_ju^j.$ Summation is not implied on like-positioned  indices, for example, $u^ju^j.$
he basis label is dropped when $\mathbf{b}=\mathbf{e},$ the natural basis. The standard inner product on $\E^n$ is denoted by $u\cdot v$ throughout.

Denoting by $A^\ast$ the adjoint of $A$ relative to the standard inner product, we define symmetric and skew operators, $A^{\sym},A^{\skew}$ in the usual way
\[
A^{\sym}:=\onehalf(A+A^\ast),\quad A^{\skew}:=\onehalf(A-A^\ast).
\]

Relative to an orthonormal basis $\mathbf{b}$ the corresponding matrix representations are, with superscript $T$ denoting matrix transpose,
\[
[A^{\sym}]_\mathbf{b}=\onehalf ([A]_\mathbf{b}+[A]_\mathbf{b}^T),\quad [A^{\skew}]_\mathbf{b}=\onehalf ([A]_\mathbf{b}-[A]_\mathbf{b}^T)
\]
so that
\begin{equation}
[A]_\mathbf{b}=[A^{\sym}]_\mathbf{b}+[A^{\skew}]_\mathbf{b}\label{sym/skew}.
\end{equation}
(While an orthonormal basis is not required for the decomposition itself the matrix properties of symmetry and skew-symmetry  are only  invariant under change of orthonormal basis.)
The symmetric and skew symmetric parts of $[A]_\mathbf{b}$ will also be denoted $[A]^{\sym}_\mathbf{b}$ and $[A]^{\skew}_\mathbf{b}.$

The determinant of $[A]_\mathbf{b}$ is denoted $\text{det}(A)$ and the trace by $\text{tr}(A),$ both being independent of $\bb.$
On occasion matrices will be used without regard to an underlying linear map in which case they will be denoted $\mathtt{B, D, S}$, etcetera.
When there is no ambiguity $\mtt{A}$ may be used instead of $[A]_\mathbf{b}.$

Finally, quadratic forms are written boldface, such as $\bQ,$ and the action of their symmetric matrix representations as self-adjoint linear operators are denoted $Q.$ So, for example,
$\bQ(u)=Q(u)\cdot u.$ In either case the matrix representation will be written as $[Q]$ rather than $[\bQ].$

\section{Planar endomorphisms}\label{Section 1}

See \cite{GP2000}. Let $A$ be a linear operator on $\mathbb{E}^2$, then for any
non-zero $u \in \mathbb{E}^2$
\[
 A(u) = (A(u)\cdot \hat u) \hat u + (A(u) \cdot \hat
u^\bot) \hat u^\bot,
\]
where $u^\bot:= R_{{\pi\over 2}} (u)$ (so
that if $u = (u^1, u^2)$ then $u^\bot = (-u^2, u^1))$.  Hence
\[A
(u) = (A (\hat u) \cdot \hat u) u + (A (\hat u) \cdot \hat u^\bot)
u^\bot
\]
and, since the adjoint of $R_{{\pi\over 2}}$ is
$R_{-{\pi \over 2}}$ relative to the standard inner product,
\[
A(u) = (A(\hat u) \cdot \hat u) u + (R_{-{\pi\over 2}}
 \circ A (\hat u)
\cdot \hat u) u^\bot
\]
which we rewrite as
\begin{equation}\label{E2 decomp}
A(u) = \bA^e(\hat u)u + \bA^r(\hat u)u^\bot,
\end{equation}
where $\bA^e$ and $\bA^r$ are the quadratic forms defined by
\[
 \bA^e
(\hat u) := A(\hat u) \cdot \hat u, \quad \bA^r
(\hat u) := R_{-{\pi \over2}} \circ A (\hat u) \cdot \hat u.
\]
If the matrix representation of $A$ (relative to the natural basis) is
\[
[A] = \left(\begin{matrix} A^1_1 &
A^1_2 \\ A^2_1 & A^2_2 \end{matrix} \right)
\]
then the matrix representations of the two quadratic forms, $\bA^e$ and
$\bA^r$, are
\[
[A^e] = \left(\begin{matrix} A^1_1 & {1 \over
2}(A^1_2 + A^2_1) \\ {1 \over 2}(A^1_2 + A^2_1) & A^2_2
\end{matrix} \right),\  [A^r] = \left(\begin{matrix} A^2_1
& {1 \over 2}(A^2_2 - A^1_1) \\ {1 \over 2}(A^2_2 - A^1_1) & -
A^1_2 \end{matrix} \right).
\]

\begin{defn}
$\bA^e (\hat u)$ and $\bA^r (\hat u)$ are respectively called the {\it expansion} and {\it rotation}
of the operator $A$ in the direction of $\hat u$. The {\it average values of the expansion and rotation} of $A, $ denoted $\overline{\bA^e},$ $\overline{\bA^r},$ are the average
values of $\bA^e (\hat u)$ and $\bA^r (\hat u)$ respectively. That is, $\overline{\bA^e}=\onehalf\tr(A^e),$ $\overline{\bA^r}=\onehalf\tr(A^r).$
\end{defn}

For non-zero $u\in\E^2$ the matrix representation of $A$ relative to the basis $\mathbf{b}:=\{\hat u,\hat u^\bot\}$ is
\begin{equation}\label{E2 mat rep}
[A]_\mathbf{b} = \left(\begin{matrix} \bA^e(\hat u) &
-\bA^r(\hat u^\bot) \\ \bA^r(\hat u) & \bA^e(\hat u^\bot) \end{matrix} \right).
\end{equation}
Using this representation (not presented in \cite{GP2000}) applied to an orthonormal eigenbasis of $[A^r]$ shows that when $\bA^r$ is indefinite
the eigenvalues of $A$ are real and are bounded above and below by those of $[A^e].$ The more general result is given later in theorem \ref{Bromwich 06}.

The following proposition can be found in  \cite{GP2000}.

\begin{propn}
\label{max/min2}
\begin{itemize}
\item[]
\item[](a) If the quadratic form $\bA^e$ is not zero, then its eigenvalues are the maximum and minimum values of the expansion of
the map $A$ and these are achieved in the corresponding eigendirections. The map A has constant expansion in all directions
if and only if $\bA^e$ has a repeated eigenvalue.

\item[](b) If the quadratic form $\bA^r$ is not zero, then its eigenvalues are the maximum and minimum values of the rotation
of the map $A$ and these are achieved in the corresponding eigendirections.  The map A has constant rotation in every direction
if and only if $\bA^r$ has a repeated eigenvalue.

\item[](c) The zeros of $\bA^r$ occur in the eigendirections of $A$.

\end{itemize}
\end{propn}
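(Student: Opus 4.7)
The plan is to deduce parts (a) and (b) directly from the spectral theorem applied to the symmetric matrices $[A^e]$ and $[A^r]$, and part (c) straight from the decomposition (\ref{E2 decomp}).

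For (a), observe that $[A^e]$ is a real symmetric $2\times 2$ matrix, hence orthogonally diagonalisable with real eigenvalues $\lambda_1\le \lambda_2$. Writing $\hat u$ in the corresponding orthonormal eigenbasis as $\hat u=\cos\theta\, e_1+\sin\theta\, e_2$, one obtains $\bA^e(\hat u)=\lambda_1\cos^2\theta+\lambda_2\sin^2\theta$, which is a convex combination of $\lambda_1$ and $\lambda_2$. The extreme values $\lambda_1,\lambda_2$ are thus attained, respectively, at the eigendirections $e_1,e_2$ and nowhere else unless $\lambda_1=\lambda_2$. Constancy of $\bA^e$ on the unit circle is then equivalent to $\lambda_1=\lambda_2$, which is the repeated-eigenvalue condition. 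Part (b) follows by the identical argument applied to the symmetric matrix $[A^r]$.

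For (c), if $\bA^r(\hat u)=0$ for some unit $\hat u$, then the decomposition (\ref{E2 decomp}) collapses to
\begin{equation}
A(u)=\bA^e(\hat u)\,u,\notag
\end{equation}
so $u$ is an eigenvector of $A$ with eigenvalue $\bA^e(\hat u)$. Thus every zero direction of $\bA^r$ is an eigendirection of $A$.

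The work is largely bookkeeping; the only point requiring a moment of care is the identification $\bA^e(\hat u)=[A^e](\hat u)\cdot \hat u$ and similarly for $\bA^r$, together with the fact that the matrices $[A^e],[A^r]$ displayed above are symmetric (so that the Rayleigh quotient picture applies). I do not foresee any serious obstacle; the main substantive content is the spectral-theoretic observation, and the algebraic identity needed for (c) is immediate from (\ref{E2 decomp}).
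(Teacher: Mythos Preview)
Your proposal is correct. The paper does not supply its own proof of this proposition but simply cites \cite{GP2000}; your spectral/Rayleigh-quotient argument for (a) and (b) together with the direct use of \eqref{E2 decomp} for (c) is the natural proof and is certainly what is intended there.
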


%{d \over d \phi} A^{\mbox{\tiny exp}} (\hat u) &=& 2 A^s (\hat u) \cdot
%\hat u^\bot \\
%&=& 2 A^s (\hat u^\bot) \cdot \hat u.
%\end{eqnarray*}
%
%If $A^{\mbox{\tiny exp}} (\hat u) = 0$ for all $\hat u \in S^1$ the
%expansion of
%$A$ in every direction is zero.  Otherwise the local extrema occur in
%the
%eigendirections of $A^s$ and hence of $A^{\mbox{\tiny exp}}$.  Because
%the domain
%of $A^{\mbox{\tiny exp}}$ is $S^1$ this pair of local extrema are
%global in
%character.  Let $\hat u$ be an eigendirection of $A^s$ belonging to the
%eigenvalue $\lambda^{\mbox{\tiny exp}}$, then
%$$
%A^{\mbox{\tiny exp}} (\hat u) = A^s (\hat u) \cdot \hat u =
%\lambda^{\mbox{\tiny exp}} $$
%Clearly if $\lambda^{\mbox{\tiny exp}}$ is a repeated eigenvalue, the
%expansion of the map has this value in every direction.
%\end{proof}

The following main theorem in \cite{GP2000} relates the eigenvalues of $A$ to those of
$\bA^e$ and $\bA^r$ and follows from the observations that
\begin{equation}\label{invariants n=2}
 \tr(A) = \tr(A^e)\ \text{and}\ \det(A)=\det(A^r) + {1\over 4} \tr(A^e)^2
\end{equation}
using various matrix representations of $A$, $\bA^e$ and $\bA^r.$
\begin{thm}
\label{evals} Let the (possibly repeated) eigenvalues of $A$ be $\lambda_1, \lambda_2$ and those of $\bA^e$ and
$\bA^r$ be $\lambda^e_1, \lambda^e_2$ and
$\lambda^r_1, \lambda^r_2$ respectively.  Then
\begin{equation}\label{quadratic eqn}
\lambda_{1,2} = {\lambda^e_1 + \lambda^e_2 \over 2} \pm\sqrt{-\lambda^r_1 \lambda^r_2}.
\end{equation}
\end{thm}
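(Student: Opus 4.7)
The formula is essentially the quadratic formula applied to the characteristic polynomial of $A$, with the trace and determinant repackaged in terms of the eigenvalues of $\bA^e$ and $\bA^r$. So my plan is to start from the fact that on $\E^2$ the eigenvalues of $A$ are the roots of
\[
\lambda^2 - \tr(A)\,\lambda + \det(A) = 0,
\]
that is,
\[
\lambda_{1,2} = \frac{\tr(A)}{2} \pm \sqrt{\tfrac{1}{4}\tr(A)^2 - \det(A)},
\]
and then substitute the two invariant identities (\ref{invariants n=2}) to rewrite both the linear and the radical part in terms of the spectral data of the quadratic forms.

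For the linear part, $\tr(A) = \tr(A^e)$ is immediate from the matrix representation $[A^e]$, whose diagonal coincides with that of $[A]$; since $[A^e]$ is symmetric, $\tr(A^e) = \lambda_1^e + \lambda_2^e$, giving the leading term $(\lambda_1^e + \lambda_2^e)/2$. For the radical, the identity $\det(A) = \det(A^r) + \tfrac{1}{4}\tr(A^e)^2$ yields
\[
\tfrac{1}{4}\tr(A)^2 - \det(A) = \tfrac{1}{4}\tr(A^e)^2 - \det(A^r) - \tfrac{1}{4}\tr(A^e)^2 = -\det(A^r),
\]
and since $[A^r]$ is symmetric, $\det(A^r) = \lambda_1^r \lambda_2^r$. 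Combining these gives exactly the claimed expression (\ref{quadratic eqn}).

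The only real work, then, is verifying the two invariant identities (\ref{invariants n=2}), which the excerpt explicitly flags as the origin of the theorem. The trace identity is read off the matrices directly. The determinant identity is a short direct computation: from $[A^r]$ one gets $\det(A^r) = -A^1_2 A^2_1 - \tfrac{1}{4}(A^2_2 - A^1_1)^2$, and expanding $\tfrac{1}{4}\tr(A^e)^2 = \tfrac{1}{4}(A^1_1 + A^2_2)^2$, the squared terms combine to $\tfrac{1}{2} A^1_1 A^2_2 - \tfrac{1}{2}A^1_1 A^2_2$ cancellations leaving $A^1_1 A^2_2 - A^1_2 A^2_1 = \det(A)$.

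The main (modest) obstacle is bookkeeping in the determinant identity and confirming that the square root lands on $\lambda_1^r \lambda_2^r$ with the correct sign; I would double-check this by noting that when $\bA^r$ is indefinite (so $\lambda_1^r \lambda_2^r < 0$) the discriminant is positive and $A$ has real eigenvalues, which matches the remark following (\ref{E2 mat rep}) and also prefigures the sharper Bromwich-style bounds mentioned in theorem \ref{Bromwich 06}.
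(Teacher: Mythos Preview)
Your proposal is correct and follows exactly the route the paper indicates: derive the two invariant identities \eqref{invariants n=2} from the explicit matrix representations of $\bA^e$ and $\bA^r$, then substitute them into the quadratic formula for the characteristic polynomial of $A$. The only blemish is the garbled sentence about ``$\tfrac{1}{2}A^1_1A^2_2 - \tfrac{1}{2}A^1_1A^2_2$ cancellations''; the clean version is simply $\tfrac{1}{4}(A^1_1+A^2_2)^2 - \tfrac{1}{4}(A^2_2-A^1_1)^2 = A^1_1A^2_2$, which together with the $-A^1_2A^2_1$ term gives $\det(A)$.
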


 This theorem shows that the average expansion of $A$ is the arithmetic mean
of the maximum and minimum expansions.  More strikingly it shows
that $A$ has complex eigenvalues precisely when the maximum and
minimum rotations have the same sign, so that the map rotates
every direction in the same sense.  This is intuitively satisfying
given the behaviour of pure planar rotations. The theorem
has a corollary which explains the geometric multiplicity of
repeated eigenvalues of the map $A$.

\begin{cor}\label{geom mult n=2}
If the linear operator $A$ has a repeated eigenvalue $\lambda$ its
geometric multiplicity is $2$ if and only if $\lambda^r_1 = 0 = \lambda^r_2$ (so that
$\bA^r$ is the zero quadratic form) and its
geometric multiplicity is $1$ if and only if only one of
$\lambda^r_1, \lambda^r_2$ is
zero.
\end{cor}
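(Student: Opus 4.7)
The plan is to leverage Theorem \ref{evals} to translate the hypothesis of a repeated eigenvalue into the algebraic condition $\lambda^r_1\lambda^r_2=0$, and then split on which of the two rotational eigenvalues actually vanish. From the formula $\lambda_{1,2}=\tfrac{\lambda^e_1+\lambda^e_2}{2}\pm\sqrt{-\lambda^r_1\lambda^r_2}$, the eigenvalues coincide precisely when the discriminant vanishes, i.e.\ when at least one of $\lambda^r_1,\lambda^r_2$ is zero. So we are only ever in one of two mutually exclusive situations, and it suffices to match each to the stated geometric multiplicity.

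For the case $\lambda^r_1=\lambda^r_2=0$, the quadratic form $\bA^r$ is identically zero (a self-adjoint operator with both eigenvalues zero is the zero operator). Substituting into the planar decomposition (\ref{E2 decomp}) yields $A(u)=\bA^e(\hat u)\,u$ for every nonzero $u$, so every nonzero vector is an eigenvector of $A$. A short linearity argument (apply $A$ to two linearly independent vectors and to their sum) forces all the scalars $\bA^e(\hat u)$ to agree, so $A=\lambda I$ and the repeated eigenvalue has geometric multiplicity $2$. Conversely, if the geometric multiplicity is $2$ then $A=\lambda I$, and a direct reading of the matrix formula for $[A^r]$ given in Section \ref{Section 1} shows that $[A^r]$ is the zero matrix, hence $\lambda^r_1=\lambda^r_2=0$.

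For the remaining case, exactly one of $\lambda^r_1,\lambda^r_2$ vanishes, say $\lambda^r_1=0\ne\lambda^r_2$. By Proposition \ref{max/min2}(c) the eigendirections of $A$ are contained in the zero set of $\bA^r$. In an orthonormal eigenbasis of $[A^r]$ we have $\bA^r(\hat u)=\lambda^r_2\sin^2\theta$ with $\theta$ the angle to the first eigendirection, so the zero set is exactly the single line spanned by that eigendirection. Hence $A$ has a unique eigenline, giving geometric multiplicity $1$. Conversely, since we have already shown that the both-zero alternative forces geometric multiplicity $2$, a geometric multiplicity of $1$ (together with the necessary condition $\lambda^r_1\lambda^r_2=0$) leaves only the possibility that exactly one of the rotational eigenvalues is zero.

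The main obstacle I expect is being careful about the converses: one must make sure that the necessary condition $\lambda^r_1\lambda^r_2=0$ extracted from Theorem \ref{evals} is correctly paired with Propositions \ref{max/min2}(b),(c) to characterise the zero set of $\bA^r$ as a line versus the whole plane, and that the ``every direction is an eigendirection'' step genuinely yields $A=\lambda I$ rather than merely a radial field. Once these small points are handled, the rest is a direct reading of the $\E^2$ decomposition.
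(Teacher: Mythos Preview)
Your proposal is correct and follows exactly the route the paper intends: the corollary is stated immediately after Theorem~\ref{evals} with no separate proof, and the argument you give---reading off $\lambda^r_1\lambda^r_2=0$ from \eqref{quadratic eqn}, then using Proposition~\ref{max/min2}(c) together with the decomposition \eqref{E2 decomp} to distinguish whether the zero set of $\bA^r$ is a line or all of $\E^2$---is precisely the unpacking the paper leaves to the reader. Your handling of the converses via the dichotomy is clean and nothing further is needed.
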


This corollary shows that a repeated eigenvalue occurs when a
map's rotation is zero in at least one direction (and the
eigenvalue achieves geometric multiplicity $2$ only when the map
itself is a pure expansion).

We can now classify the eigenvalues of linear operators on
$\mathbb{E}^2$ according to their corresponding rotations using
$\lambda^r_1 \leq \bA^r (\hat u)
\leq \lambda^r_2, \ \forall \hat u \in S^1$:

\begin{center}
\begin{tabular}{c | l}
 zeros of $\bA^r$ on $[0,\pi)$ \quad    & \quad Eigenvalues of
$A$  \\
\hline none  &\quad complex \\
 1  &\quad repeated, geometric multiplicity \ 1 \\
 2   &\quad real and distinct \\
 $\infty$  &\quad  repeated, geometric multiplicity \ 2 \\
\end{tabular}
\end{center}

\noindent Needless to say the eigenvectors of $A$ are the zeros of
$\bA^r$.

\section{Generalisation to $\E^n$}\label{Section 2}

\subsection{Preliminaries: Quadratic Forms}\label{Section 2.1 QF}

In order to generalise the first two parts of proposition \ref{max/min2} we need the following results (see in part Caratheodory \cite{Car65}):

\begin{propn}\label{Q forms2}
Let $\bQ =\diag(\lambda_1,\dots,\lambda_n)$ be a quadratic form on $\E^n$ diagonalised by the orthonormal eigenvector basis $\mathbf{b}:=\{b_1,\dots,b_n\}.$
And let $x^i$ be natural co-ordinates associated with $\mathbf{b}.$
Then
\begin{itemize}
\item[(a)] the values $\lambda_i$ of $\bQ$ on the unit $n$-sphere are attained at points $\pm b_i$
\item[(b)] if $\lambda_1\le\lambda_2\le\dots\le\lambda_n$ then $\lambda_1$ and $\lambda_n$ are the minimum and maximum values of $\bQ$ on $S^n$
\item[(c)] $\lambda_k$ is the minimum value of $\bQ$ on $\{p\in S^n:x^1(p)=0,\dots,x^{k-1}(p)=0\}$
\item[(d)] $\lambda_k$ is the maximum value of $\bQ$ on $\{p\in S^n:x^{k+1}(p)=0,\dots,x^n(p)=0\}.$
\end{itemize}
\end{propn}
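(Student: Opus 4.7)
My plan is to work entirely in the orthonormal eigencoordinates $x^1,\dots,x^n$ supplied by $\mathbf{b}$, so that for any $p\in S^n$ we have $\sum_i (x^i(p))^2 = 1$ and, by hypothesis on $\bQ$,
\[
\bQ(p) \;=\; \sum_{i=1}^n \lambda_i\,(x^i(p))^2.
\]
All four statements will then fall out of this single identity together with the normalisation constraint.

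For (a), I simply observe that $x^j(\pm b_i)=\pm\delta^j_i$, so substituting into the identity gives $\bQ(\pm b_i)=\lambda_i$.

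For (b), the computation
\[
\bQ(p)\;=\;\sum_i\lambda_i(x^i)^2\;\ge\;\lambda_1\sum_i(x^i)^2\;=\;\lambda_1
\]
establishes the lower bound, with equality forcing $(x^i)^2=0$ for every $i$ with $\lambda_i>\lambda_1$; by (a) the bound is attained at $\pm b_1$. The upper bound $\bQ(p)\le \lambda_n$, attained at $\pm b_n$, is dual.

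Parts (c) and (d) are the same estimate carried out on a restricted sphere. For (c), the constraints $x^1=\dots=x^{k-1}=0$ reduce the identity to
\[
\bQ(p)\;=\;\sum_{i=k}^n \lambda_i(x^i)^2\;\ge\;\lambda_k\sum_{i=k}^n(x^i)^2\;=\;\lambda_k,
\]
and the value $\lambda_k$ is realised at $\pm b_k$, which lies in the constraint set. Part (d) is symmetric: restricting to $x^{k+1}=\dots=x^n=0$ yields $\bQ(p)=\sum_{i=1}^k\lambda_i(x^i)^2\le\lambda_k$, attained at $\pm b_k$.

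There is no real obstacle here: the only subtlety is that the proof uses the ordering hypothesis of (b) implicitly in (c) and (d) (to know which $\lambda_i$ bounds the truncated sum), and that the distinguished minimiser/maximiser $\pm b_k$ actually satisfies the constraint equations — both are immediate from the coordinate description. No Lagrange-multiplier machinery is needed because the diagonal form trivialises the optimisation.
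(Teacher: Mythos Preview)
Your argument is correct and complete. The paper does not actually supply a proof of this proposition; it simply records the result with a parenthetical reference to Carath\'eodory \cite{Car65}. Your direct coordinate computation---writing $\bQ(p)=\sum_i\lambda_i(x^i)^2$ on the unit sphere and bounding each truncated sum by its extreme coefficient---is exactly the standard elementary route and needs no external citation. Your remark that parts (c) and (d) tacitly inherit the ordering hypothesis from (b) is apt and worth keeping, since the proposition as stated leaves that implicit.
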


\begin{defn}
%Let $Q =\diag(\lambda_1,\dots,\lambda_n)$ be a quadratic form on $\E^n$ diagonalised by the orthonormal eigenvector basis $\{b_1,\dots,b_n\}.$
The average value, $\bar \bQ,$ of a quadratic form  $\bQ$ on the unit $n$-sphere is defined to be
\[
\bar \bQ:=\frac{\int_{S^n}\bQ \d \sigma}{\int_{S^n} \d \sigma}
\]
where the denominator is the area of $S^n.$
\end{defn}
Using known results (see, for example, \cite{Foll01}) and the co-ordinates $x^j$ above, it is straightforward to show that
\begin{propn}
Let $\bQ =\diag(\lambda_1,\dots,\lambda_n)$ be a quadratic form on $\E^n$ diagonalised by the orthonormal eigenvector basis $\{b_1,\dots,b_n\}.$
Then the average value of $\bQ$ on the unit $n$-sphere is %\question{check stats result}
\[
\bar \bQ=\frac{\tr(Q)}{n}
\]
and this is achieved at each of the $2^n$ points
\[
\frac{1}{\sqrt{n}}(\pm b_1\dots\pm b_n).
\]
\end{propn}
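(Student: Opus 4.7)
The plan is to work in the natural coordinates $x^j$ associated with the diagonalising orthonormal basis, so that $\bQ$ takes the form
\begin{equation}
\bQ(x)=\sum_{i=1}^n \lambda_i (x^i)^2 \notag
\end{equation}
on the unit sphere $\{x:\sum_i (x^i)^2=1\}$. Then the numerator of $\bar\bQ$ becomes $\sum_i \lambda_i\int_{S^n}(x^i)^2\,\d\sigma$, and the whole statement reduces to evaluating the $n$ coordinate integrals $I_i:=\int_{S^n}(x^i)^2\,\d\sigma$.

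First I would observe that for each $i\neq j$ the map swapping $x^i\leftrightarrow x^j$ is an isometry of the sphere and carries $(x^i)^2$ to $(x^j)^2$, so $I_i=I_j=:I$ for all $i$. Summing,
\begin{equation}
nI=\sum_{i=1}^n \int_{S^n}(x^i)^2\,\d\sigma=\int_{S^n}\Bigl(\sum_i (x^i)^2\Bigr)\,\d\sigma=\int_{S^n}\d\sigma,\notag
\end{equation}
so $I=\frac{1}{n}\int_{S^n}\d\sigma$. Substituting back gives the numerator as $\frac{1}{n}\tr(Q)\int_{S^n}\d\sigma$, and dividing by the sphere's area yields $\bar\bQ=\tr(Q)/n$. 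This is the only real content of the first half; no deeper facts from the reference \cite{Foll01} are needed beyond the symmetry of surface measure under orthogonal transformations.

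For the second claim, the $2^n$ points $p_\varepsilon:=\frac{1}{\sqrt n}(\varepsilon_1 b_1+\dots+\varepsilon_n b_n)$ with $\varepsilon_i\in\{+1,-1\}$ have coordinates $x^i(p_\varepsilon)=\varepsilon_i/\sqrt n$, so $(x^i(p_\varepsilon))^2=1/n$ for every $i$, giving
\begin{equation}
\bQ(p_\varepsilon)=\sum_{i=1}^n \lambda_i\cdot\tfrac{1}{n}=\tfrac{1}{n}\tr(Q)=\bar\bQ,\notag
\end{equation}
independently of the sign choices. I would also remark that these $2^n$ points are the intersections of $S^n$ with the diagonals of the cube spanned by $\pm b_1,\dots,\pm b_n$, so the geometric picture is transparent.

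There is no real obstacle here; the argument is a direct consequence of the symmetry of the sphere and the constraint $\sum_i (x^i)^2=1$. The only point worth flagging is that the statement of attainment refers to \emph{these particular} $2^n$ points—the proposition does not claim they are the only points where the average is attained, and indeed for generic $\lambda_i$ the level set $\{\bQ=\tr(Q)/n\}\cap S^n$ is a codimension-one submanifold, so the claim is only that each $p_\varepsilon$ lies on it, which is what the direct substitution above verifies.
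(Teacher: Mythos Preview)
Your argument is correct and complete. The paper itself does not give an explicit proof here---it only remarks that the result follows from the coordinates $x^j$ together with known integral formulae (citing \cite{Foll01})---so your symmetry argument, which computes $\int_{S^n}(x^i)^2\,\d\sigma$ directly via the permutation invariance of the surface measure and the constraint $\sum_i(x^i)^2=1$, is in the same spirit but in fact more self-contained than what the paper sketches. Your observation that the attainment claim is only about inclusion, not exhaustion, is also apt.
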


Now we'll need to know when the zeros of a quadratic form generate a zero-valued vector subspace, particularly those {\em maximal} subspaces of largest dimension.

\begin{propn}\label{Q forms1}
Suppose that $\bQ$ is a quadratic form on $\E^n$ ($n>2$) and let $Q$ be a self-adjoint endomorphism with $\bQ(u)=u\cdot Q(u).$
Let $u,\ v$ be any zeros of $\bQ$ then %\question{need to explicitly use this somewhere!}
\[
u+v\ \text{is a zero of}\ \bQ\ \text{if and only if}\ u\cdot Q(v)=0.
\]
(Equivalently, $W$ is a zero-valued subspace of $\bQ$ if and only if $Q(W)\subset W^\bot$.)
\end{propn}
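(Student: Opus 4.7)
The plan is to prove the pointwise statement first by direct bilinear expansion, then obtain the subspace equivalence as an easy consequence.

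First I would expand
\[
\bQ(u+v)=(u+v)\cdot Q(u+v)=u\cdot Q(u)+u\cdot Q(v)+v\cdot Q(u)+v\cdot Q(v).
\]
Since $u$ and $v$ are zeros of $\bQ$ the first and last terms vanish, leaving $\bQ(u+v)=u\cdot Q(v)+v\cdot Q(u)$. Because $Q$ is self-adjoint, $v\cdot Q(u)=Q(v)\cdot u=u\cdot Q(v)$, so
\[
\bQ(u+v)=2\,u\cdot Q(v).
\]
The equivalence $\bQ(u+v)=0\iff u\cdot Q(v)=0$ is then immediate, which establishes the first half of the proposition.

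For the parenthetical equivalent formulation, I would argue both directions separately. If $W$ is a zero-valued subspace, then every $v\in W$ satisfies $\bQ(v)=0$, and for any $u,v\in W$ the sum $u+v$ lies in $W$ and is therefore also a zero; the pointwise result then forces $u\cdot Q(v)=0$ for all $u,v\in W$, i.e.\ $Q(v)\in W^\bot$ for every $v\in W$, which is exactly $Q(W)\subset W^\bot$. Conversely, if $Q(W)\subset W^\bot$, then for every $w\in W$ we have $Q(w)\in W^\bot$ and hence $\bQ(w)=w\cdot Q(w)=0$, so $W$ is zero-valued.

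There is no genuine obstacle here; the only subtlety worth flagging is that self-adjointness of $Q$ is what collapses the two cross-terms into a single symmetric condition, and that for the reverse direction of the subspace statement one should note that the containment $Q(W)\subset W^\bot$ automatically implies $w\cdot Q(w)=0$ for each individual $w\in W$, not merely for sums of given zeros, so the subspace really is zero-valued and not just closed under addition of pre-chosen zeros.
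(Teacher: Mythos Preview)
Your proof is correct and essentially identical to the paper's: both expand $\bQ(u+v)$ bilinearly, drop the diagonal terms using $\bQ(u)=\bQ(v)=0$, and invoke self-adjointness to collapse the cross terms to $2\,u\cdot Q(v)$. The paper phrases the same computation via the polar form $B(u,v)=\onehalf(\bQ(u+v)-\bQ(u)-\bQ(v))$, but the arithmetic is the same; you additionally spell out the subspace equivalence, which the paper leaves implicit.
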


\begin{proof}
Let $B$ be the polar form of $\bQ$, then
\begin{align*}
B(u,v)&=\onehalf(\bQ(u+v)-\bQ(u)-\bQ(v))\\
&=\onehalf ((u+v)\cdot Q(u+v)-u\cdot Q(u)-v\cdot Q(v))\\
&=\onehalf (v\cdot Q(u)+u\cdot Q(v)) =u\cdot Q(v).
\end{align*}
So if $u$ and $v$ are zeros of $\bQ$ then $u+v$ is a zero of $\bQ$ if and only if $B(u,v)=u\cdot Q(v)=0$.
\end{proof}

{\bf Notes} If $\text{ker}(Q)=\{0\}$ then $\bQ$ must be indefinite to have non-trivial zeros (see proposition \ref{Q forms2}).
If $\text{ker}(Q)\neq \{0\}$ then it is a zero-valued vector subspace of $\bQ$ and necessarily a subspace of any maximal zero-valued subspace of $\bQ.$%\newline

\subsection{Endomorphisms and their quadratic forms}\label{section 2.2 Endos}

Given an orthonormal basis $\mathbf{b}$, the conventional ``two-plane" rotations of $\E^n$, $\hat R_{kl},\ (k<l),$ given by
$$\hat{R}_{kl}(b_k)=b_l, \ \hat{R}_{kl}(b_l)=-b_k,\ \hat{R}_{kl}(b_m)=b_m, m\neq k,l,$$
do not satisfy $\hat R_{kl}(u)\cdot u=0$ for all $u$ unless $n=2$. In order to generalise the planar case we will instead compose $\hat R_{kl}$
with the corresponding two-plane projection giving quasi-rotations $R_{kl}$:
\[
R_{kl}(b_k)=b_l, \ R_{kl}(b_l)=-b_k,\ R_{kl}(b_m)=0, m\neq k,l.
\]
The matrix representation $[R_{kl}]_\bb$ has entries
\begin{equation}\label{identity 1}
{[R_{kl}]_\bb}^i_j=\delta^i_l\delta_{jk}-\delta^i_k\delta_{jl}.
\end{equation}

With this definition we have
\[
u^\bot_{kl}:=R_{kl}(u)=u^kb_l-u^lb_k.
\]

While $u\cdot u^\bot_{kl}=0,$ these quasi-rotations do not, of course, preserve length.

This quasi-rotation definition can be invariantly applied to any two-dimensional subspace of $\E^n$ without the need for a full basis.
Moreover, $R_{kl}$ is clearly independent of the orthonormal generators (of the same orientation) of the fixed two plane $S_{kl}:=\Sp\{b_k,b_l\}=\Sp\{b'_k,b'_l\}$ simply because $R_{kl} =R'_{kl}.$

From a structural perspective, we note that $\{[R_{kl}]: 1\le k <l\le n\}\subset M_n(\R)$ generates the subspace of skew-symmetric $n\times n$ real matrices;
also that the $[R_{kl}]$ generate the Lie algebra of the orthogonal group. Specifically, a skew symmetric matrix $\mtt{B}$ has the expansion
\begin{equation}
\mtt{B}=-\sum_{k<l}\mtt{B}^k_l[R_{kl}]\quad (\mtt{B}\ \text{skew}).\label{skew decomp}
\end{equation}

While the $R_{kl}$ depend on a particular basis for their definition and do not survive orthogonal transformations, we can use \eqref{skew decomp}
to relate the quasi-rotations $\tilde R_{pq}$ in basis $\mathbf{\tilde b}$ to $R_{kl}$ in basis $\mathbf{b}.$ If $P$ is the orthogonal transformation
from $\mathbf{b}$ to $\mathbf{\tilde b}$ so that $[\tilde R_{pq}]_\bb=P^T[\tilde R_{pq}]_{\tilde \bb}P$ then \eqref{skew decomp} implies
\begin{equation*}
\tilde R_{pq}=-\sum_{k<l}(P^T[\tilde R_{pq}]_{\tilde \bb}P)^k_lR_{kl}
\end{equation*}
and applying \eqref{identity 1} gives %\question{RIGHT!-checked}
\begin{equation}\label{Rttn basis change}
\tilde R_{pq}=-\sum_{k<l}(P^l_pP^k_q-P^l_qP^k_p)R_{kl}.
\end{equation}

Now we move on to the use of these rotations in creating a novel and useful spanning set for $\E^n.$

\begin{lem}\label{lemma 1}
For any non-zero $v\in\E^n$ and orthonormal basis $\mathbf{b}$
\[
\E^n=\Sp\{v, R_{pq}(v);\ \forall p,q;\ p<q\}.
\]
\end{lem}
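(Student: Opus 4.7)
The plan is to show that the orthogonal complement of $W:=\Sp\{v,R_{pq}(v):p<q\}$ is trivial, so that $W=\E^n$ by dimension. Working with $W^\bot$ is attractive because the spanning conditions translate very cleanly into inner-product equations, and orthonormality of $\mathbf{b}$ can be exploited immediately.

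First I would take an arbitrary $u\in W^\bot$ and convert the defining relations into coordinates. Using the explicit expression $R_{pq}(v)=v^p b_q-v^q b_p$ (the formula $u^\bot_{kl}=R_{kl}(u)=u^kb_l-u^lb_k$ stated just before the lemma, applied to $v$) together with the orthonormality of $\mathbf{b}$, the conditions $u\cdot R_{pq}(v)=0$ become
\[
u^q v^p - u^p v^q = 0 \qquad \text{for all } p<q.
\]
These equations are exactly the vanishing of every $2\times 2$ minor of the $2\times n$ matrix whose rows are the coordinate vectors of $u$ and $v$. Hence that matrix has rank at most one, so the two rows are linearly dependent; since $v\neq 0$, this gives $u=\lambda v$ for some scalar $\lambda$. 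The remaining orthogonality $u\cdot v=0$ then forces $\lambda\|v\|^2=0$, whence $\lambda=0$ and $u=0$.

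I do not foresee a real obstacle. The only mildly nontrivial observation is the recognition of the system $u^qv^p=u^pv^q$ as proportionality of $u$ and $v$, but this is a standard minor/rank fact. A direct alternative would be to pick a coordinate $k$ with $v^k\neq 0$, solve explicitly for $b_k$ as a linear combination of $v$ and the $\binom{n}{2}$ vectors $R_{pq}(v)$ (noting that each $R_{km}(v)$ lies in the two-plane $\Sp\{b_k,b_m\}$), and then recover the remaining $b_m$ one by one; but this is more computational, requires a case split between $m<k$ and $m>k$, and is arguably less transparent than the orthogonal-complement route sketched above.
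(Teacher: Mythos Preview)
Your argument is correct. The orthogonality conditions $u\cdot R_{pq}(v)=v^p u^q-v^q u^p=0$ are precisely the vanishing of all $2\times 2$ minors of the $2\times n$ coordinate matrix, so $u$ and $v$ are proportional; together with $u\cdot v=0$ and $v\neq 0$ this forces $u=0$, hence $W^\bot=\{0\}$ and $W=\E^n$.

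The paper takes a different, more constructive route: it observes that the claim is equivalent to $\Sp\{v\}^\bot=\Sp\{R_{pq}(v):p<q\}$, re-orders the basis so that $v^1\neq 0$, and then exhibits the specific subset $\{R_{12}(v),\dots,R_{1n}(v)\}$ as $n-1$ linearly independent vectors lying in $\Sp\{v\}^\bot$, which therefore form a basis for it. This is essentially the ``direct alternative'' you sketched and then set aside. The paper's approach is slightly more elementary (no appeal to rank or minors) and has the small bonus of identifying an explicit basis for $\Sp\{v\}^\bot$ inside the spanning set, whereas your orthogonal-complement argument is cleaner in that it avoids singling out a coordinate and treats all the $R_{pq}(v)$ symmetrically.
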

\begin{proof}
The statement is equivalent to the claim that (the orthogonal complement) $\Sp\{v\}^\bot=\Sp\{R_{pq}(v);\forall p,q; p<q\}.$
So suppose, without loss of generality, that $v$ has non-zero $b_1$ component. Then $\{R_{12}(v),\dots,R_{1n}(v)\}\subset \{R_{pq}(v);\forall p,q, p<q\}$ is
linearly independent and so forms a basis for the orthogonal complement.
\end{proof}

The spanning set $\{v,v^\bot_{kl}: k,l =1,\dots,n; k<l\}$ for $\|v\|=1$ has a remarkable property, akin to an orthonormal basis.

\begin{propn}\label{magic propn}
Let $u$ be an arbitrary non-zero element of $\E^n$ and $v\in\E^n$ be any unit length vector, then
\begin{equation}\label{magic formula}
u=(u\cdot v)v+\sum_{k<l}(u\cdot v^\bot_{kl})v^\bot_{kl}
\end{equation}
and so
\begin{equation}\label{magic formula(2)}
\|u\|^2=(u\cdot v)^2+\sum_{k<l}(u\cdot v^\bot_{kl})^2.
\end{equation}
\end{propn}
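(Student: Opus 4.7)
The plan is to expand the claimed right-hand side of \eqref{magic formula} directly in components, and then obtain \eqref{magic formula(2)} by taking the inner product of \eqref{magic formula} with $u$. Writing $u=u^ib_i$, $v=v^ib_i$ and $v^\bot_{kl}=v^kb_l-v^lb_k$, we have $u\cdot v^\bot_{kl}=u^lv^k-u^kv^l$, so the summand
\[
T_{kl}:=(u\cdot v^\bot_{kl})\,v^\bot_{kl}=(u^lv^k-u^kv^l)(v^kb_l-v^lb_k)
\]
is symmetric under $k\leftrightarrow l$ (both factors change sign). Hence $\sum_{k<l}T_{kl}=\onehalf\sum_{k,l}T_{kl}$, since the diagonal terms vanish. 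Expanding the unrestricted double sum produces four pieces: two of them collapse via $\sum_k(v^k)^2=\|v\|^2=1$ to give $u$, and the remaining two collapse via $\sum_j u^jv^j=u\cdot v$ to give $(u\cdot v)v$. The net is $2u-2(u\cdot v)v$, so $\sum_{k<l}T_{kl}=u-(u\cdot v)v$, which rearranges to \eqref{magic formula}. Taking the inner product of both sides with $u$ then yields \eqref{magic formula(2)}, since $u\cdot v$ and $u\cdot v^\bot_{kl}$ are scalars.

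A cleaner alternative is to apply the skew decomposition \eqref{skew decomp} to the skew-symmetric matrix $\mtt{M}$ with entries $\mtt{M}^k_l=v^ku^l-u^kv^l$. This entry coincides with $u\cdot v^\bot_{kl}$, so \eqref{skew decomp} gives $\mtt{M}=-\sum_{k<l}(u\cdot v^\bot_{kl})[R_{kl}]$. Evaluating both sides on the column vector for $v$, the left-hand side produces $(u\cdot v)v-\|v\|^2u=(u\cdot v)v-u$, while the right-hand side produces $-\sum_{k<l}(u\cdot v^\bot_{kl})v^\bot_{kl}$; rearranging again yields \eqref{magic formula}. Either route is short, and the main obstacle is purely bookkeeping; no fundamentally new tool is required beyond the definitions and the skew decomposition already in hand.
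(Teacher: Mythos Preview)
Your first argument is correct and is essentially the paper's own proof: both verify \eqref{magic formula} by a direct component expansion, exploiting the $k\leftrightarrow l$ symmetry of the summand to pass from the restricted sum $\sum_{k<l}$ to the full sum, and then collapsing via $\sum_k(v^k)^2=1$ and $\sum_j u^jv^j=u\cdot v$. The paper organises the same computation by first dotting with $b_p$ and tracking the Kronecker deltas, but the content is identical, and both derive \eqref{magic formula(2)} by taking the inner product with $u$.

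Your second route, via the skew decomposition \eqref{skew decomp} applied to $\mtt{M}^k_l=v^ku^l-u^kv^l$, is a genuinely different and rather elegant argument that the paper does not give. It trades the index bookkeeping for a single structural observation: the rank-two skew matrix $v\otimes u-u\otimes v$ acts on $v$ to produce $(u\cdot v)v-u$, while its expansion in the $[R_{kl}]$ basis reproduces exactly the sum $\sum_{k<l}(u\cdot v^\bot_{kl})v^\bot_{kl}$. This is shorter and makes the role of the quasi-rotations as a basis for skew matrices more visible; the paper's approach, by contrast, stays closer to the later proof of Theorem~\ref{thm1}, where the same component manipulations recur.
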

\begin{proof}
Take the inner product of the right hand side of equation \eqref{magic formula} with $b_p$:
\begin{align*}
&\left((u\cdot v)v+\sum_{k<l}(u\cdot v^\bot_{kl})v^\bot_{kl}\right )\cdot b_p\\
=\ &u^qv_qv^p+\sum_{k<l}(u\cdot (v^kb_l-v^lb_k))(v^kb_l-v^lb_k)\cdot b_p\\
=\ &u^qv_qv^p+\sum_{k<l}(u^lv^k-u^kv^l)(v^k\delta_{lp}-v^l\delta_{kp})\\
=\ &u^pv^pv^p+\sum_{q\neq p}u^qv^qv^p+\sum_{r\neq p}u^pv^rv^r-\sum_{s\neq p}u^sv^pv^s\\
%=\ &u^qv_qv^p+\sum_{k<l}(u^pv^kv^k-u^kv^pv^k-u^lv^pv^l+u^pv^lv^l)\\
%=\ &\sum_{k<l}(u^pv^kv^k+u^pv^lv^l)\\
=\ &u^pv^qv_q=u^p \ \text{since}\ \|v\|=1.
\end{align*}
(The identity \eqref{magic formula(2)} follows by taking the inner product with $u$.)
\end{proof}
{\bf Notes}\newline
In the case where $v=b_i$ expression \eqref{magic formula} becomes the usual orthonormal expansion of $u$.\newline
In the standard complex case the quasi-rotations become
\[
R_{pq}(u):=\bar{u}^pb_q-\bar{u}^qb_p.
\]

Now we can create expansive and rotational quadratic forms for an endomorphism.

Let $A$ be a non-trivial endomorphism on $\E^n$ with orthonormal basis $\mathbf{b},$ and $u\neq 0.$
\begin{defn} {\em Expansion of an endomorphism}\newline
The {\em expansion} of $A$ in the direction $u$ is the value $\bA^e(\hat u)$ of the quadratic form $\bA^e$ defined by
\begin{equation}\label{expn_defn}
\bA^e(u):=A(u)\cdot u=A_{ij}u^iu^j.
\end{equation}
\end{defn}
Notice that $[A^e]_\mathbf{b}=[A]^{\sym}_\mathbf{b}.$

\begin{defn}{\em Rotations of an endomorphism}\label{rttns}\newline
The {\em rotation} of the direction $\hat u$ by $A$ in the two plane $\Sp\{b_k,b_l\}$ is the value $\bA^r_{kl}(\hat u)$ of the quadratic form $\bA^r_{kl}$ defined by
\begin{equation}\label{rttn_defn}
\bA^r_{kl}(u):=A(u)\cdot R_{kl}(u)=(R^\ast_{kl}\circ A)(u)\cdot u=-(R_{kl}\circ A)(u)\cdot u,
\end{equation}
where $R^\ast_{kl}$ is the adjoint of $R_{kl}$ relative to the standard inner product.
\end{defn}
A short calculation gives
\[
\bA^r_{kl}(u)=A^l_ju^ju^k-A^k_ju^ju^l.
\]

Note that $\bA^r_{kl}(b_m)=0$ for $m\neq k, l$ and so
\begin{equation}\label{skew cmpts}
\tr(A^r_{kl})=\bA^r_{kl}(b_l)+\bA^r_{kl}(b_k) = -A^k_l+A^l_k=-2([A^{\skew}]_\mathbf{b})^k_l,
\end{equation}
and as a result \eqref{skew decomp} gives
\begin{equation}\label{skew decomp 2}
A^\skew=\onehalf\sum_{k<l}\tr(A^r_{kl})R_{kl}
\end{equation}
demonstrating how the components of $A^{\skew}$ are constructed from the (diagonal) components of $\bA^r_{kl}$, remembering that these
quadratic forms depend on $\mathbf{b}$ through the rotations $R_{kl}.$

We can relate rotational quadratic forms in different orthonormal frames
using \eqref{Rttn basis change}

\begin{equation}\label{A^r basis change}
{\tilde \bA}^r_{pq}(u):=A(u)\cdot\tilde R_{pq}(u)=-\sum_{k<l}(P^l_pP^k_q-P^l_qP^k_p)R_{kl}(u)\cdot A(u)=-\sum_{k<l}(P^l_pP^k_q-P^l_qP^k_p)\bA^r_{kl}(u).
\end{equation}

It is clear from \eqref{rttn_defn} that the common zeros of the $\bA^r_{kl}$ are exactly the eigenvectors of $A.$ As a result $\tilde \bA^r_{pq}$ and $\bA^r_{kl}$ share their common zeros.
The following proposition gives another shared property.

\begin{propn}
 If $\bA^r_{kl}$ and $\tilde \bA^r_{pq}$ are the rotational quadratic forms of an endomorphism $A$ on $\E^n$ relative to bases $\mathbf{b},\ \mathbf{\tilde b}$ respectively, then
\begin{equation}
\sum_{k<l}\tr(A^r_{kl})=\sum_{p<q}\tr(\tilde A^r_{pq}).
\end{equation}

\end{propn}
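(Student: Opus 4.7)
The plan is to use the basis-change formula \eqref{A^r basis change} together with the basis-invariance of the trace of a quadratic form (the trace of the associated self-adjoint operator coincides with the trace of its symmetric matrix representation in any orthonormal basis). Applying $\tr$ to both sides of \eqref{A^r basis change} and using linearity yields
\[
\tr(\tilde A^r_{pq}) = -\sum_{k<l}\bigl(P^l_p P^k_q - P^l_q P^k_p\bigr)\,\tr(A^r_{kl}).
\]
Summing over $p<q$ and interchanging the order of summation then gives
\[
\sum_{p<q}\tr(\tilde A^r_{pq}) = -\sum_{k<l}\tr(A^r_{kl})\,C_{kl},\qquad C_{kl}:=\sum_{p<q}\bigl(P^l_p P^k_q - P^l_q P^k_p\bigr).
\]
So the proposition reduces to the combinatorial identity $C_{kl}=-1$ for every $k<l$ (which is trivially verified in the case $P=I$, where only the term $(p,q)=(k,l)$ survives and contributes $-1$).

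To establish $C_{kl}=-1$ in general I would exploit the orthogonality of $P$: its rows satisfy $\sum_p P^k_p P^{l'}_p=\delta^{kl'}$. The quantity $C_{kl}$ is the sum of the $2\times 2$ minors of the $2\times n$ submatrix of $P$ formed by rows $k$ and $l$, taken over column pairs with $p<q$. A Cauchy--Binet-style manipulation applied to this $2\times n$ block, together with the antisymmetry of the summand in $p\leftrightarrow q$, is the natural tool; one has to be careful because the full sum $\sum_{p,q}(P^l_pP^k_q-P^l_qP^k_p)$ vanishes identically, so the asymmetric restriction $p<q$ is precisely what produces the nontrivial value.

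A more structural alternative is to use \eqref{skew decomp 2} directly. Both $\sum_{k<l}\tr(A^r_{kl})R_{kl}$ and $\sum_{p<q}\tr(\tilde A^r_{pq})\tilde R_{pq}$ represent the same basis-independent operator $2A^\skew$. If one can identify a linear functional $\phi$ on skew operators assigning the same value to every elementary $R_{kl}$ (and to every $\tilde R_{pq}$), then applying $\phi$ to both representations of $2A^\skew$ immediately extracts the scalar equality, bypassing the minor calculation. The main obstacle either way is the combinatorial step $C_{kl}=-1$, and in particular controlling how the ordered restriction $p<q$ interacts with the orthogonal transformation $P$; this is where the delicate work sits.
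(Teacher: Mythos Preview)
Your reduction is exactly the paper's: both arrive at
\[
\sum_{p<q}\tr(\tilde A^r_{pq})=-\sum_{k<l}C_{kl}\,\tr(A^r_{kl}),\qquad C_{kl}=\sum_{p<q}\bigl(P^l_pP^k_q-P^l_qP^k_p\bigr),
\]
and the paper's final sentence (``observing that $\sum_{p<q}[\tilde R_{pq}]_{\tilde\bb}$ is skew with every upper triangular entry $-1$ gives the result'') is precisely the assertion $C_{kl}=-1$ after conjugation by $P$. You were right to single this out as the delicate step; the trouble is that it is not merely delicate but \emph{false}. Take $n=3$ and the proper rotation $P$ with $\tilde b_1=b_2$, $\tilde b_2=-b_1$, $\tilde b_3=b_3$; a two-line check gives $C_{13}=+1$. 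Concretely, for the skew $A$ with $A(b_1)=-b_3$, $A(b_3)=b_1$, $A(b_2)=0$ one computes $\sum_{k<l}\tr(A^r_{kl})=-2$ in $\bb$ but $\sum_{p<q}\tr(\tilde A^r_{pq})=+2$ in $\tilde\bb$.

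So the gap in your proposal is not a missing Cauchy--Binet trick: the target identity $C_{kl}=-1$ simply does not hold for general orthogonal $P$, and the proposition as stated is false. The paper's own remark immediately after the proof---that the claim is equivalent to the orthogonal invariance of the sum of the entries of $[A^\skew]$---already betrays the problem, since that sum (equivalently the sum of the strictly upper-triangular entries) is not an orthogonal invariant. Your ``structural alternative'' fails for the same reason: there is no linear functional on skew operators taking the value $1$ on every $R_{kl}$ for \emph{every} orthonormal frame, because the $R_{kl}$ from one frame are signed combinations of those from another.
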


\begin{proof}
Let $R_{kl}$ and $\tilde R_{pq}$ be the two-plane rotations in bases $\mathbf{b},\ \mathbf{\tilde b}$ respectively.
Then, since the trace is independent of basis,%\question{CHECK NEXT STEPS}
\[
\tr(\tilde A^r_{pq})=\sum_i\tilde \bA^r_{pq}(\tilde b_i)=\sum_i\tilde \bA^r_{pq}(b_i)=\sum_i\tilde R^T_{pq}\circ A(b_i)\cdot b_i=-\sum_{k<l}(P^T[\tilde R_{pq}]_\mathbf{\tilde b}P)^k_l\tr(A^r_{kl}),
\]
using $\tilde R_{pq}=-\sum_{k<l}(P^T[\tilde R_{pq}]_{\tilde
\bb}P)^k_lR_{kl}.$ Hence
\begin{align}
\sum_{p<q}\tr(\tilde  A^r_{pq})&=-\sum_{p<q}\sum_{k<l}(P^T[\tilde R_{pq}]_\mathbf{\tilde b}P)^k_l\tr(A^r_{kl}))\\
&=-\sum_{k<l}(P^T(\sum_{p<q}[\tilde R_{pq}]_\mathbf{\tilde b})P)^k_l\tr(A^r_{kl}).
\end{align}
Observing that the matrix $\sum_{p<q}[\tilde R_{pq}]_\mathbf{\tilde b}$ is skew with every upper triangular entry equal to $-1$ gives the result.
\end{proof}
This is equivalent to showing that the sum of the entries of $[A^\skew]$ is invariant under orthogonal transformations.

Section \ref{Section 2.1} explores the expansionary and rotational forms in privileged bases associated with the decomposition $A=A^{\sym}+A^{\skew}.$
However, at this stage we can draw upon the real case of Bromwich's 1906 theorem \cite{Brom06} (see \cite{Mirsky90} p.389 for more detail):
%\question{relate imaginary parts of $\lambda$ to evals of rttn'l forms}
\begin{thm}\label{Bromwich 06}
Let $A$ be an endomorphism on $\E^n$ with associated endomorphisms $A^{\sym}, A^{\skew}.$ If $\lambda$ is any eigenvalue of $A$ then
\[
\nu\leq \Re(\lambda)\leq N \ \text{and}\ \mu\leq\Im(\lambda)\leq M
\]
where $\nu, N$ are the minimum and maximum eigenvalues of $A^{\sym}=A^e$ and $\mu, M$ are the minimum and maximum eigenvalues of the Hermitian matrix $\frac{1}{i}[A^{\skew}]=\frac{1}{2i}\sum_{k<l}\tr(A^r_{kl})[R_{kl}].$
\end{thm}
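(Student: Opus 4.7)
The plan is to reduce the statement to the Rayleigh quotient bound for Hermitian operators after complexifying. First I would extend $A$ to act on $\mathbb{C}^n$ and, for an eigenvalue $\lambda=\alpha+i\beta$, pick a unit eigenvector $z\in\mathbb{C}^n$ under the standard Hermitian inner product $\langle\cdot,\cdot\rangle$. Then $\lambda=\lambda\langle z,z\rangle=\langle Az,z\rangle$, so the problem becomes bounding $\Re\langle Az,z\rangle$ and $\Im\langle Az,z\rangle$ separately.

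Next I would use the decomposition $A=A^{\sym}+A^{\skew}$ from \eqref{sym/skew}. Since $A^{\sym}$ is real symmetric, it is Hermitian on $\mathbb{C}^n$, so $\langle A^{\sym}z,z\rangle$ is real. Since $A^{\skew}$ is real and skew, its Hermitian adjoint on $\mathbb{C}^n$ is $(A^{\skew})^T=-A^{\skew}$, so $A^{\skew}$ is skew-Hermitian and $\langle A^{\skew}z,z\rangle$ is purely imaginary; equivalently $H:=\tfrac{1}{i}A^{\skew}$ is Hermitian. Splitting real and imaginary parts of $\lambda=\langle Az,z\rangle$ then yields
\begin{equation*}
\alpha=\langle A^{\sym}z,z\rangle,\qquad \beta=\langle Hz,z\rangle.
\end{equation*}

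Now I would invoke the Hermitian analogue of proposition \ref{Q forms2}(b): for a Hermitian operator with ordered real eigenvalues, the Rayleigh quotient $\langle Hz,z\rangle/\langle z,z\rangle$ lies between the smallest and largest eigenvalues. Applied to $A^{\sym}$, whose eigenvalues are exactly the eigenvalues $\nu\leq\cdots\leq N$ of $\bA^e$, this gives $\nu\leq\alpha\leq N$. Applied to $H$, with real eigenvalues $\mu\leq\cdots\leq M$, it gives $\mu\leq\beta\leq M$. The explicit matrix form $\tfrac{1}{2i}\sum_{k<l}\tr(A^r_{kl})[R_{kl}]$ is then nothing more than equation \eqref{skew decomp 2} multiplied through by $-i$.

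The main obstacle is essentially expository: the paper has remained deliberately in the real setting up to this point, so the step demanding the most care is the brief complexification argument showing that $A^{\skew}$ is skew-Hermitian on $\mathbb{C}^n$ and hence that $-iA^{\skew}$ has real eigenvalues controlling $\Im(\lambda)$. Once that is in place the whole theorem collapses to the standard Rayleigh quotient bound, itself the Hermitian counterpart of the real bounds already recorded in proposition \ref{Q forms2}.
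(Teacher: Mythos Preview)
Your argument is correct and is precisely the approach the paper indicates: the paper does not give a formal proof but only an ``Aside'' remarking that in the complex setting the two Hermitian quadratic forms $A^{\sym}$ and $\tfrac{1}{i}A^{\skew}$ take the values $\Re(\lambda)$ and $\Im(\lambda)$ at the eigendirection for $\lambda$, and these values lie between the respective extreme eigenvalues. Your proposal is a clean, explicit version of exactly that sketch, including the identification of $\tfrac{1}{i}[A^{\skew}]$ via \eqref{skew decomp 2}.
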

Aside: Bromwich's theorem is easy to prove in the quadratic form framework because (in the complex scenario) both the corresponding Hermitian
quadratic forms achieve each of the eigenvalues of $A$ in the corresponding eigendirections, obviously between their respective extreme values.

\begin{propn}{Generalisation of proposition \ref{max/min2}}
\label{max/min3}\newline
Let $A$ be a non-trivial endomorphism on $\E^n$, $\mathbf{b}$ an orthonormal basis and non-zero $u\in\E^n.$\\ Then %\question{double check parts (a),(b)}
\begin{itemize}
\item[](a) If the quadratic form $\bA^e$ is not zero, then its maximum and minimum eigenvalues are the maximum and minimum values respectively of the expansion of
the map $A$ and these are achieved in the corresponding eigendirections. The real parts of the eigenvalues of $A$ lie between the maximum and minimum expansions.
The map A has non-zero constant expansion on a subspace $W$ with $\dim(W)>1$ if and only if $W$ is the eigenspace of a repeated eigenvalue of $\bA^e.$

\item[](b) If the quadratic form $\bA^r_{kl}$ is not zero, then its maximum and minimum eigenvalues are the maximum and minimum values respectively of the rotation by $A$ in
 the two-plane $\Sp\{b_k,b_l\}$ and these are achieved in the corresponding eigendirections.  The map A has non-zero constant rotation in $\Sp\{b_k,b_l\}$ on a subspace $W$ with $\dim(W)>1$
if and only if $W$ is an invariant subspace of  a repeated eigenvalue of $\bA^r_{kl}.$

\item[](c) $u$ is an eigenvector of $A$ $\iff$ $u$ is a common zero of all $\bA^r_{kl}.$

\end{itemize}
\end{propn}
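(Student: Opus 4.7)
The plan is to treat the three parts separately, drawing on the preparatory material for quadratic forms in section \ref{Section 2.1 QF}.

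For (a), the claim about maximum and minimum expansions is a direct application of proposition \ref{Q forms2}(a)--(b) to $\bA^e$, whose associated self-adjoint operator is $A^e=A^\sym$: in an orthonormal eigenbasis the largest and smallest eigenvalues realise the extreme values of $\bA^e$ on the unit sphere and are attained at the corresponding eigendirections. The localisation of the real parts of the spectrum of $A$ inside this interval is the first half of Bromwich's theorem \ref{Bromwich 06}. For the equivalence involving constant expansion value $c\ne 0$ on a subspace $W$ with $\dim W>1$, the ``if'' direction is immediate ($A^e u=cu$ on $W$ gives $\bA^e(u)=c\|u\|^2$); for the ``only if'' direction, I would apply proposition \ref{Q forms1} to the shifted quadratic form whose associated operator is $A^e-cI$, yielding $(A^e-cI)(W)\subset W^\bot$, and then use the self-adjointness of $A^e-cI$ together with the orthogonal spectral decomposition of $A^e$ to conclude that $W$ lies in the $c$-eigenspace of $A^e$, which is then repeated since $\dim W>1$.

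Part (b) is strictly parallel to (a), with $\bA^e$ replaced by $\bA^r_{kl}$ and the appropriate self-adjoint operator (the symmetric part of $R^\ast_{kl}\circ A$, whose quadratic form agrees with $\bA^r_{kl}$). The extremal assertion again comes from proposition \ref{Q forms2}, and the constant-rotation equivalence from the same shift-and-polarise argument combined with proposition \ref{Q forms1}.

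Part (c) is the cleanest. From definition \ref{rttns}, $\bA^r_{kl}(u)=A(u)\cdot R_{kl}(u)$, and $R_{kl}(u)=u^kb_l-u^lb_k\perp u$ by direct computation. Hence if $A(u)=\lambda u$ then $\bA^r_{kl}(u)=\lambda(u\cdot R_{kl}(u))=0$ for every pair $k<l$. Conversely, if all $\bA^r_{kl}(u)$ vanish, then $A(u)$ is orthogonal to every $R_{kl}(u)$; by lemma \ref{lemma 1} the vectors $\{R_{kl}(u):k<l\}$ together with $u$ span $\E^n$, and since each $R_{kl}(u)$ is already perpendicular to $u$ they in fact span $\Sp\{u\}^\bot$, so $A(u)\in\Sp\{u\}$ and $u$ is an eigenvector of $A$.

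The step requiring most care will be the subspace equivalences in (a) and (b): proposition \ref{Q forms1} only supplies the weak inclusion $(A^e-cI)(W)\subset W^\bot$, and upgrading this to $A^e|_W=cI_W$ so as to identify $W$ with a genuine repeated eigenspace will need the full strength of self-adjointness and the orthogonal spectral decomposition of $A^e$.
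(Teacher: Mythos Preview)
The paper states this proposition without proof, leaving it to follow from the preparatory propositions \ref{Q forms2}, \ref{Q forms1} and theorem \ref{Bromwich 06}; your plan invokes exactly those tools, so on the extremal claims in (a) and (b), on the Bromwich bound, and on part (c) your argument is correct and is precisely what the paper has in mind. (Indeed the paper remarks, just before the proposition, that part (c) is ``clear from \eqref{rttn_defn}''; your use of lemma \ref{lemma 1} makes the converse implication explicit and rigorous.)

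The one genuine gap is exactly where you flag it: the ``only if'' direction of the constant-expansion equivalence in (a), and its analogue in (b). Your plan is to pass from $(A^e-cI)(W)\subset W^\bot$, which proposition \ref{Q forms1} does deliver, to $A^e|_W=cI_W$ via self-adjointness and the spectral decomposition. That step cannot be closed. Take $[A^e]_\bb=\diag(2,0,1)$ and $W=\Sp\{b_1+b_2,\,b_3\}$: for any unit $u=(\alpha,\alpha,\beta)\in W$ one has $\bA^e(u)=2\alpha^2+\beta^2=\|u\|^2=1$, so the expansion is the non-zero constant $1$ on the two-dimensional subspace $W$, yet the eigenvalues $2,0,1$ of $\bA^e$ are all simple and $W$ is not an eigenspace of any repeated eigenvalue. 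Self-adjointness alone does not upgrade $(A^e-cI)(W)\subset W^\bot$ to $(A^e-cI)(W)=\{0\}$; in this example $(A^e-I)(b_1+b_2)=b_1-b_2\in W^\bot\setminus\{0\}$. So either the subspace clause needs an extra hypothesis (for instance that $c$ be an extreme value of $\bA^e$, in which case proposition \ref{Q forms2}(c)--(d) does force $W$ into the corresponding eigenspace), or its conclusion must be weakened to ``$W$ is contained in a maximal zero-valued subspace of $\bA^e-c\|\cdot\|^2$''. Your sketch cannot be completed as written, and you should say so rather than promise a spectral argument that does not exist.
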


Noting that, as a result of part (c) of proposition \ref{max/min3} and the linear independence of eigenvectors of distinct eigenvalues, the maximal
common zero-valued subspaces of the $\bA^r_{kl}$ are disjoint,
the matter of geometric multiplicity resolves immediately, generalising corollary \ref{geom mult n=2}.

\begin{cor}\label{geom mult arb n}
Let $A$ be a non-trivial endomorphism on $\E^n$ with orthonormal basis $\mathbf{b}$. Then
\begin{itemize}
\item[(a)] For odd $n$ the $\bA^r_{kl}$ have at least one common zero.
\item[(b)] The dimension of each (maximal) common zero-valued subspace
of the $\bA^r_{kl}$ is equal to the geometric multiplicity of the corresponding eigenvalue of $A$.%\question{check `real'}
%\item[(c)] The geometric multiplicity of each eigenvalue is obtained by evaluating the common zeros of $A^r_{kl}$ on $A^e.$
\end{itemize}
\end{cor}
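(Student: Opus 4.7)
For part (a), I would use the elementary fact that any real polynomial of odd degree has at least one real root. Since $n$ is odd, the characteristic polynomial of $A$ is a real polynomial of degree $n$ and so has a real eigenvalue, with a corresponding real eigenvector $u \in \E^n$. Part (c) of Proposition \ref{max/min3} then says this $u$ is a common zero of every $\bA^r_{kl}$, which is precisely the claim.

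For part (b), the plan is to identify each maximal common zero-valued subspace with a real eigenspace of $A$ by establishing two inclusions. In one direction, each real eigenspace $E_\lambda$ is already a common zero-valued subspace of the $\bA^r_{kl}$: for any $u \in E_\lambda$ and any $k<l$,
\[
\bA^r_{kl}(u) = A(u) \cdot R_{kl}(u) = \lambda\, u \cdot R_{kl}(u) = 0,
\]
since $R_{kl}(u) \perp u$ by construction of the quasi-rotations.

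In the other direction, I would show that any common zero-valued subspace $W$ is contained in a single real eigenspace. By Proposition \ref{max/min3}(c), every non-zero element of $W$ is an eigenvector of $A$. If two such elements $u_1, u_2 \in W$ had distinct real eigenvalues $\lambda_1 \neq \lambda_2$, they would be linearly independent, and yet $u_1 + u_2 \in W$ would itself have to be an eigenvector, contradicting the standard fact that a sum of eigenvectors of distinct eigenvalues is never an eigenvector. Hence $W \subset E_\lambda$ for some real $\lambda$. Combining this with the first inclusion and invoking maximality of $W$ forces $W = E_\lambda$, so $\dim W$ equals the geometric multiplicity of $\lambda$. I do not expect any substantive obstacle here: Proposition \ref{max/min3}(c) has already done the analytic work, and what remains is the short subspace argument above, together with the odd-degree polynomial remark for~(a).
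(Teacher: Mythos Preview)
Your proposal is correct and follows essentially the same route as the paper, which simply observes (in the sentence preceding the corollary) that the result is immediate from Proposition~\ref{max/min3}(c) together with the linear independence of eigenvectors belonging to distinct eigenvalues. You have merely made explicit the two inclusions and the odd-degree argument that the paper leaves to the reader.
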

It goes without saying that if any of the $\bA^r_{kl}$ are positive or negative definite then $A$ has no real eigenvalues, and, for $n$ odd, all of the rotational forms are indefinite.

Motivated by the $n=2$ decomposition \eqref{E2 decomp}, lemma \ref{lemma 1} and proposition \ref{magic propn}, I now present the key to the generalisation of the $n=2$ results to arbitrary dimension.

\begin{thm}{Main theorem}\label{thm1}

Let $A$ be a non-trivial endomorphism on $\E^n$ with orthonormal basis $\mathbf{b}$ and non-zero $u\in\E^n.$
Then
\begin{equation}
A(u)=\bA^e(\hat u)u+\sum_{\stackrel{k,l}{k<l}}\bA^r_{kl}(\hat u)R_{kl}(u).\label{main result}
\end{equation}

\end{thm}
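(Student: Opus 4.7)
The plan is to derive \eqref{main result} as an immediate consequence of Proposition \ref{magic propn} applied to the vector $A(u)$ with the unit reference vector chosen as $v=\hat u$. All that remains after that is to recognise the coefficients, using the quadratic-form homogeneity of $\bA^e$ and $\bA^r_{kl}$, as the expansion and the rotations of $A$ in the direction $\hat u$.

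First, apply \eqref{magic formula} with $v=\hat u$ to the vector $A(u)\in\E^n$:
\begin{equation*}
A(u)=(A(u)\cdot \hat u)\hat u+\sum_{k<l}(A(u)\cdot \hat u^\bot_{kl})\hat u^\bot_{kl},
\end{equation*}
where $\hat u^\bot_{kl}=R_{kl}(\hat u)=R_{kl}(u)/\|u\|$ by linearity of $R_{kl}$. So the representation of $A(u)$ in the spanning set $\{\hat u, \hat u^\bot_{kl}\}$ is immediate; only the coefficients need identifying.

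Next, I rewrite the expansion coefficient. Since $\bA^e$ is a quadratic form, $A(u)\cdot u = \|u\|^2\bA^e(\hat u)$, and hence
\begin{equation*}
(A(u)\cdot\hat u)\hat u=\frac{A(u)\cdot u}{\|u\|}\cdot\frac{u}{\|u\|}=\bA^e(\hat u)\,u,
\end{equation*}
which reproduces the first term of \eqref{main result}. For the rotation terms, I use definition \eqref{rttn_defn} of $\bA^r_{kl}$ together with its quadratic-form homogeneity: $A(u)\cdot R_{kl}(u)=\bA^r_{kl}(u)=\|u\|^2\bA^r_{kl}(\hat u)$. Therefore
\begin{equation*}
(A(u)\cdot \hat u^\bot_{kl})\hat u^\bot_{kl}
=\frac{A(u)\cdot R_{kl}(u)}{\|u\|}\cdot\frac{R_{kl}(u)}{\|u\|}
=\bA^r_{kl}(\hat u)\,R_{kl}(u),
\end{equation*}
which reproduces the terms in the sum on the right of \eqref{main result}.

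There is really no obstacle here: Proposition \ref{magic propn} has already done the conceptual work by establishing that $\{v,v^\bot_{kl}\}$ behaves, with respect to the specific combination on the right-hand side of \eqref{magic formula}, like an orthonormal basis. The only care needed is the bookkeeping of the scaling factors $\|u\|$, which cancel exactly between the coefficient and the basis vector on account of $\bA^e$ and $\bA^r_{kl}$ being homogeneous of degree two. Assembling the two displayed identities above yields \eqref{main result}.
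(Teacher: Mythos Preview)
Your proof is correct and follows essentially the same route as the paper's first proof: apply Proposition~\ref{magic propn} with $v=\hat u$ to the vector $A(u)$ and then identify the coefficients via the homogeneity of $\bA^e$ and $\bA^r_{kl}$. The paper additionally supplies a second, purely computational proof (showing the difference is orthogonal to the spanning set of Lemma~\ref{lemma 1}), but your argument matches the primary one.
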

\begin{proof}
\ \newline

Unusually I offer two proofs. From proposition \ref{magic propn}

\begin{align*}
A(u)&=(A(u)\cdot\hat u)\hat u + \sum_{\stackrel{k,l}{k<l}}(A(u)\cdot {\hat u}^\bot_{kl}){\hat u}^\bot_{kl}\\
&=(A(\hat u)\cdot \hat u)u + \sum_{\stackrel{k,l}{k<l}}(A(\hat u)\cdot {\hat u}^\bot_{kl}){u}^\bot_{kl}\\
&=\bA^e(\hat u)u+\sum_{\stackrel{k,l}{k<l}}\bA^r_{kl}(\hat u)R_{kl}(u).
\end{align*}
This ends the first proof. If this seems a manifest abuse of the linear expression \eqref{magic formula}, instead set
\[
w_u:=A(u)-\bA^e(\hat u)u-\sum_{\stackrel{k,l}{k<l}}\bA^r_{kl}(\hat u)R_{kl}(u).
\]
We will show for all non-zero $u$ that $w_u$ is orthogonal to the elements of
the spanning set in lemma \ref{lemma 1} and is thus zero. Clearly $w_u\cdot
u=A(u)\cdot u-\bA^e(\hat u)u\cdot u=0,$ so it remains to demonstrate this for
$R_{pq}(u),$ assumed non-zero. Now
\[
R_{kl}(u)\cdot R_{pq}(u)=(u^kb_l-u^lb_k)\cdot(u^pb_q-u^qb_p)=u^ku^p\delta_{lq}-u^ku^q\delta_{lp}-u^lu^p\delta_{kq}+u^lu^q\delta_{kp}
\]
and
\[
\bA^r_{kl}(\hat u):=A(\hat u)\cdot R_{kl}(\hat u)=A^i_j\hat u^jb_i\cdot (\hat u^kb_l-\hat u^lb_k)=A_{lj}\hat u^j\hat u^k-A_{kj}\hat u^j\hat u^l.
\]
Hence
\[
\sum_{\stackrel{k,l}{k<l}}\bA^r_{kl}(\hat u)R_{kl}(u)\cdot R_{pq}(u)=\sum_{\stackrel{k,l}{k<l}}\left(A_{lj}\hat u^j\hat u^k-A_{kj}\hat u^j\hat u^l\right)
\left(u^ku^p\delta_{lq}-u^ku^q\delta_{lp}-u^lu^p\delta_{kq}+u^lu^q\delta_{kp}\right).
\]
The terms in this sum are symmetric $k$ and $l$ (the two factors being skew symmetric) and so the sum on $k,l$ with $k<l$ can be replaced by a sum on all $k,l$ without the $k=l$ terms:
\begin{align*}
\sum_{\stackrel{k,l}{k<l}}\bA^r_{kl}(\hat u)R_{kl}(u)\cdot R_{pq}(u)&=\frac{1}{2}\sum_{\stackrel{k,l}{k\neq l}}\left(A_{lj}\hat u^j\hat u^k-A_{kj}\hat u^j\hat u^l\right)\left(u^ku^p\delta_{lq}-u^ku^q\delta_{lp}-u^lu^p\delta_{kq}+u^lu^q\delta_{kp}\right)\\
&-\sum_{k=1}^{n}\left(A_{kj}\hat u^j\hat u^k-A_{kj}\hat u^j\hat u^k\right)\left(u^ku^p\delta_{kq}-u^ku^q\delta_{kp}-u^ku^p\delta_{kq}+u^ku^q\delta_{kp}\right),
\end{align*}
each term in the second sum is clearly zero. This leaves
\[
\sum_{\stackrel{k,l}{k<l}}\bA^r_{kl}(\hat u)R_{kl}(u)\cdot R_{pq}(u)=\frac{1}{2}\sum_{\stackrel{k,l}{k\neq l}}\left(A_{lj}\hat u^j\hat u^k-A_{kj}\hat u^j\hat u^l\right)\left(u^ku^p\delta_{lq}-u^ku^q\delta_{lp}-u^lu^p\delta_{kq}+u^lu^q\delta_{kp}\right).
\]
Simplification gives
\[
\sum_{\stackrel{k,l}{k<l}}\bA^r_{kl}(\hat u)R_{kl}(u)\cdot R_{pq}(u)=\sum_{k}\left(A_{qj}\hat u^j\hat u^p-A_{pj}\hat u^j\hat u^q\right)u^ku^k=A_{qj} u^j u^p-A_{pj}u^j u^q=\bA^r_{pq}(u)
\]
and so
\[
w_u\cdot R_{pq}(u)=A(u)\cdot R_{pq}(u)-\bA^r_{pq}(u)=0
\]
as required.

\end{proof}
Taking the inner product of \eqref{main result} with $A(\hat u)$ and using the definitions of the quadratic forms gives
\begin{cor}\label{main thm cor}
\begin{equation}
\|A(\hat u)\|^2=\mathbf{A}^e(\hat u)^2+\sum_{k<l}{\mathbf{A}^r_{kl}(\hat u)}^2.\label{magic formula(3)}
\end{equation}
\end{cor}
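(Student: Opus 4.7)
The plan is to specialise the main identity \eqref{main result} to $u=\hat u$, which gives
\[
A(\hat u)=\bA^e(\hat u)\,\hat u+\sum_{k<l}\bA^r_{kl}(\hat u)\,\hat u^\bot_{kl},
\]
and then take the standard inner product of both sides with $A(\hat u)$ itself. The left-hand side becomes $\|A(\hat u)\|^2$. On the right-hand side the scalar coefficients $\bA^e(\hat u)$ and $\bA^r_{kl}(\hat u)$ pull out, and one is left to evaluate $A(\hat u)\cdot\hat u$ and $A(\hat u)\cdot\hat u^\bot_{kl}$; but these are, by the very definitions \eqref{expn_defn} and \eqref{rttn_defn}, precisely $\bA^e(\hat u)$ and $\bA^r_{kl}(\hat u)$. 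Collecting terms delivers the advertised formula.

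An equivalent route is to apply proposition \ref{magic propn}, in the Pythagorean form \eqref{magic formula(2)}, directly to the vector $A(\hat u)$ with the unit vector $v:=\hat u$; the squared coefficients that appear in that sum are immediately the squared expansion and squared rotations of $A$ in the direction $\hat u$. Either derivation is essentially one line once theorem \ref{thm1} and proposition \ref{magic propn} are in place.

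I do not anticipate any genuine obstacle. The one subtlety worth flagging is that the spanning set $\{\hat u,\hat u^\bot_{kl}:k<l\}$ is \emph{not} orthonormal when $n>2$, so one must resist the temptation to read off $\|A(\hat u)\|^2$ by naively summing the squared coefficients of the expansion \eqref{main result}; the identity genuinely relies on the sharper tight-frame-like property established in proposition \ref{magic propn}, which was already the engine behind the first proof of theorem \ref{thm1}.
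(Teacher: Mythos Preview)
Your proposal is correct and is exactly the paper's own argument: the corollary is stated immediately after theorem \ref{thm1} with the one-line justification ``taking the inner product of \eqref{main result} with $A(\hat u)$ and using the definitions of the quadratic forms,'' which is precisely your first route. Your alternative via \eqref{magic formula(2)} is also valid and, as you note, amounts to the same computation since the first proof of theorem \ref{thm1} already rests on proposition \ref{magic propn}.
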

The next corollary follows simply by applying the result of theorem \ref{thm1} to the adjoint, $A^\ast,$ of $A.$ It identifies the quadratic forms associated
with the symmetric and skew symmetric parts of $A$.
\begin{cor}\label{cor comms}
\begin{align}
A^{\sym}(u)&=\bA^e(\hat u)u+\onehalf\sum_{\stackrel{k,l}{k<l}}\left([A,R_{kl}](\hat u)\cdot\hat u\right) R_{kl}(u)\\
A^{\skew}(u)&=-\onehalf\sum_{\stackrel{k,l}{k<l}}\left(\{A,R_{kl}\}(\hat u)\cdot \hat u\right) R_{kl}(u),
\end{align}
where $[\ ,\ ]$ and $\{\ ,\ \}$ are the usual commutator and anti-commutator operations.
\end{cor}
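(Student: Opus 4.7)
The plan is to apply the main theorem (theorem \ref{thm1}) to both $A$ and its adjoint $A^\ast$, and then form the half-sum and half-difference of the resulting decompositions to recover $A^\sym=\onehalf(A+A^\ast)$ and $A^\skew=\onehalf(A-A^\ast)$. Applied to $A^\ast$, theorem \ref{thm1} reads
\[
A^\ast(u) = \bigl(A^\ast(\hat u) \cdot \hat u\bigr) u + \sum_{k<l} \bigl(A^\ast(\hat u) \cdot R_{kl}(\hat u)\bigr) R_{kl}(u),
\]
so the work reduces to expressing the expansionary and rotational coefficients of $A^\ast$ in terms of those of $A$ together with the operators $R_{kl}$.

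Two simple identities then carry the argument. First, by the defining property of the adjoint, $A^\ast(u) \cdot u = u \cdot A(u) = \bA^e(u)$, so the expansionary coefficient is unchanged. Second, because each $R_{kl}$ is represented by a skew-symmetric matrix, $R_{kl}^\ast = -R_{kl}$, and hence
\[
\bA^r_{kl}(u) = A(u)\cdot R_{kl}(u) = -(R_{kl} \circ A)(u) \cdot u,
\qquad
A^\ast(u) \cdot R_{kl}(u) = (A \circ R_{kl})(u) \cdot u.
\]
Adding and subtracting these yields
\[
\bA^r_{kl}(u) + A^\ast(u) \cdot R_{kl}(u) = [A, R_{kl}](u) \cdot u,
\qquad
\bA^r_{kl}(u) - A^\ast(u) \cdot R_{kl}(u) = -\{A, R_{kl}\}(u) \cdot u.
\]

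Adding the two instances of theorem \ref{thm1} (for $A$ and for $A^\ast$) and dividing by two then produces the stated formula for $A^\sym$: the term $\bA^e(\hat u) u$ survives because it is common to both decompositions, and the first identity above supplies the commutator coefficient. Subtracting and dividing by two produces the formula for $A^\skew$: the $\bA^e(\hat u) u$ contribution cancels, and the minus sign in front of the anticommutator arises directly from the second identity. The only care needed is tracking the signs introduced by the skew-adjointness of $R_{kl}$ so that the commutator and anticommutator emerge with the orientations asserted in the statement; once theorem \ref{thm1} and $R_{kl}^\ast = -R_{kl}$ are in hand, the remainder is routine bookkeeping and presents no substantive obstacle.
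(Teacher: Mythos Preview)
Your proposal is correct and follows precisely the approach indicated in the paper, which states only that the corollary ``follows simply by applying the result of theorem~\ref{thm1} to the adjoint, $A^\ast$, of $A$.'' You have supplied the details the paper omits: the expansionary coefficient is unchanged under passage to the adjoint, and the skew-adjointness of $R_{kl}$ converts the rotational coefficients of $A$ and $A^\ast$ into $-(R_{kl}A)(\hat u)\cdot\hat u$ and $(AR_{kl})(\hat u)\cdot\hat u$, whose sum and difference are exactly the commutator and (negative) anticommutator expressions.
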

For comparison, and because $T(u)\cdot u=\onehalf(T+T^\ast)(u)\cdot u$,
\begin{cor}\label{cor comms II}
\begin{align}
\bA^e(\hat u)&=A^{\sym}(\hat u)\cdot\hat u\\
\bA^r_{kl}(\hat u)&=\onehalf[A^{\sym},R_{kl}](\hat u)\cdot \hat u-\onehalf\{A^{\skew},R_{kl}\}(\hat u)\cdot \hat u.
\end{align}
\end{cor}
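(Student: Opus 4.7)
The plan is to derive both identities from the hint that $T(u)\cdot u = \tfrac12 (T+T^\ast)(u)\cdot u$ for any endomorphism $T$, applied to judiciously chosen $T$, and then expand using $A=A^{\sym}+A^{\skew}$ (so that $A^\ast = A^{\sym}-A^{\skew}$).

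For the first identity I would take $T = A$. The definition \eqref{expn_defn} gives $\bA^e(\hat u)=A(\hat u)\cdot\hat u$, and the hint instantly turns this into $\tfrac12(A+A^\ast)(\hat u)\cdot\hat u = A^{\sym}(\hat u)\cdot\hat u$. Nothing more is required.

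For the second identity I would start from the equivalent form of the definition in \eqref{rttn_defn}, namely $\bA^r_{kl}(\hat u)=-(R_{kl}\circ A)(\hat u)\cdot\hat u$, and apply the hint with $T:=-R_{kl}\circ A$. Since $R_{kl}$ is skew, $R_{kl}^\ast=-R_{kl}$, so $T^\ast = -A^\ast\circ R_{kl}^\ast = A^\ast\circ R_{kl}$. Thus
\[
\bA^r_{kl}(\hat u)=\tfrac12\bigl(-R_{kl}A+A^\ast R_{kl}\bigr)(\hat u)\cdot\hat u.
\]
Substituting $A = A^{\sym}+A^{\skew}$ and $A^\ast = A^{\sym}-A^{\skew}$ into the bracketed operator and collecting terms by $A^{\sym}$ and $A^{\skew}$ yields
\[
-R_{kl}A+A^\ast R_{kl} = \bigl(A^{\sym}R_{kl}-R_{kl}A^{\sym}\bigr) - \bigl(A^{\skew}R_{kl}+R_{kl}A^{\skew}\bigr) = [A^{\sym},R_{kl}]-\{A^{\skew},R_{kl}\},
\]
which immediately produces the stated formula after reinserting the factor $\tfrac12$.

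There really is no obstacle here: the only non-routine step is recognising that the natural $T$ to feed into the hint for the rotational piece is $-R_{kl}\circ A$ rather than $A$ itself, and then using the skew-symmetry of $R_{kl}$ to flip the sign in the adjoint. Everything else is bookkeeping on the parity of $R_{kl}$ against the symmetric and skew parts of $A$. As a sanity check one could verify directly that the right-hand side of the second identity is the quadratic form of the symmetric part of $-R_{kl}\circ A$, which agrees with what $\bA^r_{kl}$ must be.
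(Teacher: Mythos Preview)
Your argument is correct and is exactly the elaboration the paper intends: the text immediately preceding the corollary supplies only the hint $T(u)\cdot u=\onehalf(T+T^\ast)(u)\cdot u$ and leaves the reader to carry out precisely the computation you give, with $T=A$ for the expansion and $T=-R_{kl}\circ A$ (using $R_{kl}^\ast=-R_{kl}$) for the rotation.
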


It's interesting to compare the decomposition \eqref{main result}  with the standard Euler--Cauchy--Stokes decomposition theorem~\cite{T77},
which is itself an extension of the symmetric/skew-symmetric decomposition:

\begin{thm} ( Euler--Cauchy--Stokes Decomposition Theorem)\ %\question{probs delete this}
The matrix representation of $A$ can be uniquely decomposed as
\[
[A] = {\Theta \over 2} I_n + \Sigma + \Omega
\]
 where $\Theta :=\tr(A)$ (so that $\displaystyle{\Theta \over n}$ is the
average expansion of $A$), \ $\Sigma$ is a trace-free symmetric matrix representing shear and $\Omega$ is a skew-symmetric matrix
representing the twist of the map.
\end{thm}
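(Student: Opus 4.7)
The plan is to build the three-term decomposition in two stages, piggy-backing on the symmetric/skew-symmetric decomposition \eqref{sym/skew} already established in the Notation section. First I would set $\Omega := [A^{\skew}]_{\mathbf{b}}$, which is skew-symmetric by construction. Next I would split the symmetric part by isolating its trace: define $\Sigma := [A^{\sym}]_{\mathbf{b}} - \tfrac{\Theta}{n} I_n$, which is symmetric as a difference of symmetric matrices, and trace-free because $\tr([A^{\skew}]_{\mathbf{b}})=0$ forces $\tr([A^{\sym}]_{\mathbf{b}})=\tr([A]_{\mathbf{b}})=\Theta$, so $\tr(\Sigma)=\Theta-n\cdot(\Theta/n)=0$. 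Combining, $[A]_{\mathbf{b}} = \tfrac{\Theta}{n}I_n + \Sigma + \Omega$. (The coefficient $\Theta/2$ printed in the statement should be read as $\Theta/n$ in general dimension, as confirmed by the parenthetical remark that $\Theta/n$ is the average expansion; the $n=2$ case of course reproduces the printed form.)

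For uniqueness, suppose $[A]_{\mathbf{b}}=cI_n+\Sigma_1+\Omega_1 = c'I_n+\Sigma_2+\Omega_2$ with each $\Sigma_i$ trace-free symmetric and each $\Omega_i$ skew. Projecting onto the symmetric and skew summands of $M_n(\mathbb{R})$ (using that $I_n$, $\Sigma_i$ are symmetric and $\Omega_i$ is skew) yields $cI_n+\Sigma_1=c'I_n+\Sigma_2$ and $\Omega_1=\Omega_2$. Taking the trace of the first identity and invoking $\tr(\Sigma_i)=0$ gives $nc=nc'$, hence $c=c'$ and therefore $\Sigma_1=\Sigma_2$.

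There is no real obstacle here; the theorem is essentially the observation that, with respect to an orthonormal basis, $M_n(\mathbb{R})$ splits as an internal direct sum of three subspaces: the line $\mathbb{R}\,I_n$, the space of trace-free symmetric matrices, and the space of skew-symmetric matrices. The only mild subtlety worth flagging is basis-dependence: the symmetric/skew property of the matrix representation is only invariant under orthogonal change of basis (as already noted after \eqref{sym/skew}), so the decomposition is canonical once an orthonormal basis is fixed, but the blocks $\Sigma$ and $\Omega$ transform under conjugation by orthogonal matrices when the basis is changed.
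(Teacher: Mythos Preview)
Your argument is correct, and your observation about the coefficient is well taken: as stated, the scalar part should carry $\Theta/n$ rather than $\Theta/2$ for the decomposition to hold in $\E^n$ with $\Sigma$ trace-free (the printed $\Theta/2$ is the $n=2$ specialisation, consistent with the parenthetical remark about the average expansion).

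There is, however, no proof in the paper to compare against: the theorem is quoted as a classical result from Truesdell~\cite{T77} and stated only for the purpose of contrasting it with the quadratic-form decomposition of Theorem~\ref{thm1}. Your two-step construction (skew part first, then trace extraction from the symmetric part) together with the direct-sum uniqueness argument is exactly the standard proof, and nothing further is needed.
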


We close with the observation that, for arbitrary orthonormal basis $\bb$ and arbitrary operator $A$  (that is, no algebraic relations amongst the entries of $[A]_\bb$),
the set $\{[A^e]_\bb,[A^r_{kl}]_\bb:k<l\}$ is linearly independent in the space of $n \times n$ real matrices.

\subsection{Matrix representations and invariants}\label{Section 2.1}

The goal of this section is, for arbitrary $n$, to completely describe the invariants of $A$ in terms of those of its expansionary and rotational quadratic forms.
We could try and produce a formula akin to \eqref{quadratic eqn} or, less ambitiously, like \eqref{invariants n=2}. If we choose the latter then we
should also account for invariants such as $\tr(AA^*)$ %\question{deal with $\tr(AA^T)$: not a symmetric poly?}
which can't be written in terms of $\tr(A^2)$ and $\tr(A)^2$. Key tools here will be the Cayley-Hamilton theorem and Newton's trace formulae. %\question{delete NTF?}.

\subsubsection{Canonical bases}

We begin with a discussion of orthonormal bases distinguished by the quadratic forms.

The first observation about the result \eqref{main result} in comparison with \eqref{E2 decomp} is that the former does not immediately generate a matrix representation for $A$ akin to
\eqref{E2 mat rep}. This is because $\{\hat u, R_{kl}(\hat u): k<l\}$ is a linearly dependent set unlike $\{\hat u,\hat u^\bot\}.$
There are two obvious ways to deal with this. The first is to choose a linearly independent subset of these generators. For example, for a fixed $u$ the basis $\mathbf{b}$
can be re-ordered so that $u^1\neq 0,$ then the set
$\{\hat u, R_{1,l}(\hat u): l=2,\dots,n\}$ is linearly independent (but not orthonormal) and  a matrix representation can be produced.
This may be useful when we have specific values for $u$ and $\mathbf{b}$ in mind, perhaps in numerical applications.
The second approach avoids the issue and focuses on choices of $\mathbf{b}$ as follows.

The orthonormal basis $\mathbf{b}$ in the expression \eqref{main result} is arbitrary. However, the definition of $R_{kl}$ does not survive a change of basis.
But there are canonical bases associated with $A.$

Firstly, suppose that $[A]^{\skew}_\mathbf{b}\neq 0$ in \eqref{sym/skew}, then $[A]^{\skew}_\mathbf{b}$ is orthogonally similar
to the real block form
\begin{equation}
\text{diag}(\mathtt{A}^s_1,\mathtt{A}^s_3,\dots,\mathtt{A}^s_{2p+1},0,\dots,0)\ \text{with}\
\mathtt{A}^s_k:=\left[ \begin{matrix}0 & \lambda^s_k\\-\lambda^s_k&0 \end{matrix}\right]\ (k\ \text{odd})\label{skew block form1}
\end{equation}
and the non-zero, necessarily pure imaginary,  eigenvalues of $[A]^{\skew}_\mathbf{b}$ are $\pm \lambda^s_1i,\dots,\pm\lambda^s_{2p+1}i$ (\cite{Gant59}).
In this orthonormal basis
\begin{equation}
[A]^{\skew}_\mathbf{b}=-\lambda^s_1[R_{12}]-\dots -\lambda^s_{2p+1}[R_{(2p+1)(2p+2)}].\label{skew block form2}
\end{equation}
Notice from the identity \eqref{skew cmpts} that in this basis
\[
\lambda^s_k=-\frac{1}{2}\tr(A^r_{k(k+1)}),\ k=1,3,\dots,2p+1,
\]
which can be used in theorem \ref{Bromwich 06} to give bounds on the imaginary parts of the eigenvalues of $A$.
In this basis $[A]_\mathbf{b}$ is symmetric except on the non-zero diagonal $2\times 2$ blocks:
\[
A^i_j=A^j_i,\ i<j\ \text{and}\ (i,j)\neq (2\ell+1,2(\ell+1)).
\]
Application of corollary \ref{cor comms II} to give $\bA^r_{kl}$ doesn't appear to be productive.

In the case where additionally $\bA^e$ is zero, $[A]$ is skew-symmetric in any orthonormal basis $\bb$. In such a basis $[A^2]$ is non-zero and symmetric.
The relationship between the eigenspaces of $A^2$ and the invariant subspaces of $A$ is given by %\question{ref??}
\begin{propn}
Suppose $A$ has skew-symmetric matrix representation $[A]_\mathbf{b}$ relative to an orthonormal basis $\mathbf{b}$.
Then the eigenspaces of $A^2$ are invariant under $A$ and the invariant subspaces of $A$ are invariant under $A^2.$

\end{propn}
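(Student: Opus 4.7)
The plan is to observe that both inclusions are immediate consequences of the commutativity $[A,A^2]=0$, with the skew-symmetry hypothesis only being needed to guarantee that $A^2$ has a genuine orthogonal eigenspace decomposition in the first place. First I would note that because $[A]_\mathbf{b}$ is skew-symmetric and $\mathbf{b}$ is orthonormal, $A^*=-A$, so $(A^2)^*=(A^*)^2=A^2$, i.e.\ $A^2$ is self-adjoint. By the spectral theorem $A^2$ therefore admits a full orthogonal decomposition into real eigenspaces, which is what makes the statement substantive: it says that this canonical decomposition is refined by $A$ itself.

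For the first claim, let $v$ be an eigenvector of $A^2$ with $A^2v=\mu v$. Applying $A$ to both sides and using that $A$ commutes with $A^2$ gives $A^2(Av)=A(A^2 v)=\mu(Av)$, so $Av$ lies in the $\mu$-eigenspace. Hence every eigenspace of $A^2$ is $A$-invariant. For the second claim, if $W$ is $A$-invariant then $A(W)\subseteq W$, and applying $A$ once more yields $A^2(W)=A(A(W))\subseteq A(W)\subseteq W$; so $W$ is $A^2$-invariant as well.

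The only thing that looks like it could be an obstacle is not really an obstacle: one might worry that the skew-symmetry is needed somewhere in the algebra, but it is not — both inclusions follow from the purely formal fact that $A$ and $A^2$ commute. The role of skew-symmetry is solely to ensure that $A^2$ is diagonalisable with real eigenvalues and orthogonal eigenspaces, so that the statement ``eigenspaces of $A^2$'' gives a full decomposition of $\E^n$ rather than merely some invariant pieces of a possibly defective operator. This frames the proposition correctly as a compatibility statement between the (rich) spectral structure of $A^2$ and the invariant-subspace structure of $A$.
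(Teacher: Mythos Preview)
Your argument is correct. In fact the paper states this proposition without proof, presumably regarding it as standard; your proof is exactly the natural one. The two claims are indeed purely formal consequences of $[A,A^2]=0$, and your remark that the skew-symmetry serves only to make $A^2$ self-adjoint (so that its eigenspaces give a full orthogonal decomposition of $\E^n$) correctly identifies the role of that hypothesis in the surrounding discussion.
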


So in the case that $[A]_\mathbf{b}$ is skew-symmetric we could change to any orthonormal basis of $A^2$ as a canonical basis in expression \eqref{main result}.
This option won't be pursued further here; in general we assume that $[A]_\mathbf{b}$ is neither symmetric nor skew-symmetric in the
arbitrary orthonormal basis $\mathbf{b}.$ We will work with bases diagonalising  $A^{\sym}$ next; such bases will generally be more useful than
those diagonalising $A^{\skew}.$ But note that  non-zero $A^{\skew}$ and $A^{\sym}$ commute if and only if $A$ and $A^\ast$ commute, that is when $A$ is normal; this will be dealt with shortly.

Suppose now that the quadratic form  $\bA^e$ is non-zero (equivalently $[A]^{\sym}_\mathbf{b}\neq 0$ in \eqref{sym/skew}); this is true independent of the choice of basis.
Since $\bA^e$ has a symmetric matrix representation, its eigenvalues are real and its eigenspaces provide possibly non-unique, but canonical, orthonormal bases for $\E^n$.
Any of these bases can be used for $\mathbf{b}$, giving the rotations $R_{kl}$ intrinsic meaning.

So assume that  $[A^e]_\mathbf{b}=\text{diag}(\lambda^e_1,\dots,\lambda^e_n)$ where $\lambda^e_k$ are the real eigenvalues of $\bA^e.$
$[A]_\mathbf{b}$ splits into an expansionary diagonal (symmetric) part and a rotational skew-symmetric part (denoted $D$ and $S$ respectively), seen by
putting $u=b_p$ in \eqref{main result} and taking the inner product with $b_q$ gives the matrix entries $A^q_p:=A(b_p)\cdot b_q$:
\begin{align}
A^p_p&=\bA^e(b_p)=\lambda^e_p\quad  (\text{no sum}),\label{MC1}\\
A^q_p&=-A^p_q=\bA^r_{pq}(b_p)=\bA^r_{pq}(b_q)=\onehalf\tr(A^r_{pq}),\quad p<q.\label{MC2}
\end{align}

Aside: It follows from these expressions that the elementary symmetric
polynomial invariants of $A$ can be expressed as composites of the
$\lambda^e_p$ and the traces of the $\bA^r_{pq}$, with the traces of the
rotations appearing, rather than the eigenvalues themselves, because of their
basis dependence. \newline Applying corollary \ref{cor comms II}, the
rotational forms in this basis are
\begin{align*}
\bA^r_{kl}(\hat u)&={\bA^{\sym}}^r_{kl}(\hat u)+{\bA^{\skew}}^r_{kl}(\hat u)\\
&=\onehalf[A^{\sym},R_{kl}](\hat u)\cdot \hat u-\onehalf\{A^{\skew},R_{kl}\}(\hat u)\cdot \hat u\\
&=\onehalf\left[D,[R_{kl}]\right](\hat u)\cdot \hat u-\onehalf\{S,[R_{kl}]\}(\hat u)\cdot \hat u,
\end{align*}
abbreviating $[R_{kl}]_\mathbf{b}$ to $[R_{kl}]$ and identifying $n\times 1$
matrices with $\E^n$. The matrix representations of the two rotational forms
have entries
\begin{align*}
[{A^{\sym}}^r_{kl}]^i_j:=&{A^{\sym}}^r_{kl}(b_j)\cdot b_i=\onehalf\left( -[R_{kl}]^i_mD^m_j+D^i_m[R_{kl}]^m_j\right)\\
=&\onehalf[R_{kl}]^i_j\left(D^i_i-D^j_j \right)=\onehalf(\delta_{jk}\delta_{il}-\delta_{jl}\delta_{ik})(\lambda^e_i- \lambda^e_j)
\end{align*}
and
\begin{align*}
[{A^{\skew}}^r_{kl}]^i_j:=&{A^{\skew}}^r_{kl}(b_j)\cdot b_i=-\onehalf\left( [R_{kl}]^i_mS^m_j+S^i_m[R_{kl}]^m_j\right)\\
=&\onehalf\left(\left(S^l_j\delta_{ki}-S^k_j\delta_{li}\right)+\left(S^l_i\delta_{kj}-S^k_i\delta_{lj}\right)\right).
\end{align*}

Finally, in this basis
\[
[A^r_{kl}]^i_j=\onehalf(\delta_{jk}\delta_{il}-\delta_{jl}\delta_{ik})(\lambda^e_i-\lambda^e_j)+\onehalf\left(\left(S^l_j\delta_{ki}-S^k_j\delta_{li}\right)+\left(S^l_i\delta_{kj}-S^k_i\delta_{lj}\right)\right)
\]
so that only the $k$ and $l$ rows and columns have nonzero entries. Notice that ${A^{\sym}}^r_{kl}$ makes no contribution to $\tr(A^r_{kl}).$

To end this section we use this basis to produce a characterisation of normal matrices in terms of their expansionary and rotational quadratic forms.
As earlier remarked non-zero $A^{\skew}$ and $A^{\sym}$ are simultaneously diagonalisable in this case. Also note that if either $[A^e]_\mathbf{b}=D=0$ or $S=0$ then $A$ is immediately normal.

\begin{propn}{Normal operators I}\label{normal I}\\
Let $A$ be a non-trivial endomorphism on $\E^n.$ Suppose that $\bA^e \neq 0$ so that $[A]_\mathbf{b}=D+S$ in an orthonormal eigenbasis $\mathbf b$
for $\bA^e$ with $[A^e]_\mathbf{b}=D=\text{diag}(\lambda^e_1,\dots,\lambda^e_n)$ and $S$ is skew. $\bA^r_{ij}$ are the rotational forms relative to this basis. Then
\begin{itemize}
\item[] $A$ is normal if and only if $DS=SD$
\item[] $A$ is normal if and only if $\tr(A^r_{ij})=0$ for each $i<j$ such that $\lambda^e_i\neq\lambda^e_j$

\item[] $A$ cannot be normal if the $\lambda^e_i$ are all distinct and at
    least one $\tr(A^r_{ij})\neq 0$
\item[] If all the $\lambda^e_i$ are distinct then $A$ is normal if and only if  $A=A^e$
\item[] If $A$ is normal but not symmetric then $\bA^e$ has at least one repeated eigenvalue.
\end{itemize}
\end{propn}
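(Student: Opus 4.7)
The plan is to express the normality condition $AA^*=A^*A$ directly in terms of the orthogonal decomposition $[A]_\bb = D+S$, then translate the resulting commutator condition into statements about $\tr(A^r_{ij})$ using the matrix-component formulae \eqref{MC1}--\eqref{MC2}.

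First I would observe that since $\bb$ is orthonormal, $[A^*]_\bb = D - S$, so a short computation gives
\[
[A,A^*] = (D+S)(D-S) - (D-S)(D+S) = 2(SD - DS).
\]
Therefore $A$ is normal if and only if $[D,S]=0$, establishing part (i). Because $D=\diag(\lambda^e_1,\dots,\lambda^e_n)$, the entry-wise form of this condition is
\[
(DS - SD)^i_j = (\lambda^e_i - \lambda^e_j)\,S^i_j = 0
\]
for all $i,j$, i.e.\ $S^i_j = 0$ whenever $\lambda^e_i \neq \lambda^e_j$.

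Next I would invoke the relation \eqref{skew cmpts}, which in the present basis (where the off-diagonal entries of $[A]_\bb$ coincide with those of $S$) reads
\[
\tr(A^r_{ij}) = -2\,S^i_j, \qquad i<j,
\]
so that $S^i_j=0 \iff \tr(A^r_{ij})=0$ for the ordered pair $i<j$. Combining this with the commutator characterisation yields part (ii): normality is equivalent to $\tr(A^r_{ij})=0$ for every pair with $\lambda^e_i\neq\lambda^e_j$. Parts (iii)--(v) then follow by straightforward specialisation. For (iii) and (iv), if all $\lambda^e_i$ are distinct the condition in (ii) forces $\tr(A^r_{ij})=0$ for every $i<j$, hence $S=0$, hence $A=A^e$; contrapositively this gives (iii). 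For (v), if $A$ is normal but not symmetric then $S\neq 0$, so some off-diagonal $S^i_j\neq 0$; by the commutator condition this forces $\lambda^e_i = \lambda^e_j$ for that pair, so $\bA^e$ has a repeated eigenvalue.

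There is no substantive obstacle here; the only care needed is bookkeeping with the index conventions (matching the $i<j$ ordering used in \eqref{skew cmpts} and \eqref{MC2}) and noting that the canonical basis diagonalising $\bA^e$ is precisely what lets the commutator $[D,S]$ be read off entry-wise. The whole argument is essentially a translation of the classical fact that a normal matrix commutes with its symmetric part into the quadratic-form language of this paper.
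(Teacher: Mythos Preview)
Your argument is correct and essentially identical to the paper's own proof: both compute $(D+S)(D-S)-(D-S)(D+S)$ to reduce normality to $DS=SD$, read this entry-wise as $(\lambda^e_i-\lambda^e_j)S^i_j=0$, and then invoke $S^i_j=-\onehalf\tr(A^r_{ij})$ from \eqref{skew cmpts}. Your write-up is in fact slightly more explicit than the paper's in spelling out how parts (iii)--(v) follow.
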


\begin{proof}
$[A]_\mathbf{b}[A]^T_\mathbf{b}=(D+S)(D-S)$ and $[A]^T_\mathbf{b}[A]_\mathbf{b}=(D-S)(D+S)$ so that $AA^*=A^*A \iff DS=SD.$
(It is a standard result that in general $A$ is normal if and only if its skew and symmetric parts commute.)

Now
\begin{equation}
(DS)^i_j-(SD)^i_j =D^i_kS^k_j-S^i_kD^k_j=D^i_iS^i_j-S^i_jD^j_j
\end{equation}
so that
\begin{equation}
DS=SD \iff S^i_j(D^i_i-D^j_j)=0\ \text{for all}\ i,j.
\end{equation}
Recalling that $\lambda^e_i=D^i_i$ and $S^i_j=-\onehalf\tr(A^r_{ij})$ establishes the remaining claims.
\end{proof}

The following results will be important in the next section.

\begin{propn}{Normal operators II}\label{normal II}\\
Suppose that $A$ is a normal operator on $\E^n$ and that in some orthonormal basis $[A]_\bb=D+S$, where $D$ is diagonal and non-zero $S$ is skew symmetric. Then
\begin{itemize}
\item[(a)] For positive integers $p,q$ the symmetric matrices $[(A^p)^e]_\bb$ and $[(A^q)^e]_\bb$ commute.
\item[(b)] There exists a basis $\bb$ in which $S^2$ is diagonal and which simultaneously diagonalises $[(A^p)^e]_\bb$ for all positive integers $p.$

\end{itemize}
\end{propn}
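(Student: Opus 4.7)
The plan is to exploit the characterisation from proposition \ref{normal I} that normality of $A$ is equivalent to $DS=SD$ in the given basis, and then let the binomial theorem do the work. Since $D$ is symmetric and $S$ is skew, $A^\ast$ has matrix representation $D-S$ in $\bb$, and $(A^p)^\ast = (A^\ast)^p$ immediately gives
\[
[(A^p)^e]_\bb = \onehalf\bigl((D+S)^p + (D-S)^p\bigr).
\]
Because $D$ and $S$ commute I can apply the binomial theorem to both summands, whereupon the terms with odd powers of $S$ cancel, leaving
\[
[(A^p)^e]_\bb = \sum_{2k \le p} \binom{p}{2k} D^{p-2k} S^{2k}.
\]
Next I observe that $(S^2)^T = S^T S^T = (-S)(-S) = S^2$, so $S^2$ is symmetric, and since $D$ commutes with $S$ it commutes with $S^2$. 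Thus $[(A^p)^e]_\bb$ is a polynomial in the commuting symmetric pair $D, S^2$, and part (a) is immediate: any two polynomials in a single commuting pair commute.

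For part (b), the commuting symmetric matrices $D$ and $S^2$ admit a simultaneous orthonormal eigenbasis; let $P$ be the corresponding orthogonal change of basis and set $\bb' := P\bb$. In this new basis the decomposition becomes $[A]_{\bb'} = P^T D P + P^T S P$, still the sum of a diagonal (symmetric) matrix $D' := P^T D P$ and a skew matrix $S' := P^T S P$, with $(S')^2 = P^T S^2 P$ diagonal by construction. The identity from the first step, applied in the new basis, then expresses each $[(A^p)^e]_{\bb'}$ as a polynomial in the two diagonal matrices $D'$ and $(S')^2$, hence diagonal.

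The only delicate point is legitimising the binomial expansion, which rests entirely on the commutativity $DS=SD$ supplied by normality via proposition \ref{normal I}; once that is in hand, everything else is bookkeeping about the symmetric/skew decomposition behaving as expected under an orthogonal change of basis.
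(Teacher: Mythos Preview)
Your argument is correct and in fact slightly cleaner than the paper's. The key difference is where the binomial expansion enters. You use $DS=SD$ up front to obtain the explicit formula
\[
[(A^p)^e]_\bb=\sum_{2k\le p}\binom{p}{2k}D^{p-2k}(S^2)^k,
\]
so that every $[(A^p)^e]_\bb$ is visibly a polynomial in the commuting symmetric pair $D,S^2$; parts (a) and (b) then drop out together. The paper instead proves (a) at the level of $A$ and $A^\ast$, expanding $\onehalf([A^p]+[A^p]^T)\cdot\onehalf([A^q]+[A^q]^T)$ and repeatedly using $AA^\ast=A^\ast A$ to commute factors, without reference to $D$ or $S$. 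For (b) the paper works in the reverse order to yours: it first invokes (a) to simultaneously diagonalise the whole family $\{[(A^p)^e]\}$, and only then observes that these polynomials in $D,S$ commute with $S^2$, so $S^2$ can be added to the commuting family. Your route is shorter because identifying the generators $D$ and $S^2$ at the outset makes the simultaneous diagonalisation a two-matrix problem rather than an infinite-family one. One small remark: proposition~\ref{normal I} is stated under the hypothesis $\bA^e\neq0$, so strictly your appeal to it should be accompanied by the (trivial) observation that $DS=SD$ also holds when $D=0$; alternatively just cite the standard fact, noted in the proof of proposition~\ref{normal I}, that normality is equivalent to the symmetric and skew parts commuting.
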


\begin{proof}\

\begin{itemize}
\item[(a)]
Assume that neither $[(A^p)^e]_\bb$ or $[(A^q)^e]_\bb$ are zero. Now
\begin{align*}
[(A^p)^e]_\bb[(A^q)^e]_\bb &=\frac{1}{4}([A^p]_\bb+[A^p]_\bb^T)([A^q]_\bb+[A^q]_\bb^T)\\
&=\frac{1}{4}([A^{p+q}]_\bb+[A^{p+q}]_\bb^T+[A^p]_\bb[A^q]_\bb^T+[A^p]_\bb^T[A^q]_\bb).
\end{align*}
Repeated use of the normality condition on the last two terms moves the $[A^q]_\bb^T$ and $[A^q]_\bb$ terms to the left, giving the desired result.

\item[(b)] The result in part (a) guarantees the existence of a basis $\bb$ simultaneously diagonalising the matrices $[(A^p)^e]_\bb$.
In such a basis
\[
[(A^p)^e]_\bb=\onehalf\left((D+S)^p+((D+S)^p)^T\right).
\]
These are polynomials in $D$ and $S$ and, because of normality ($DS=SD$), they commute with $S^2$ for all $p.$
This establishes the existence of the required basis.
\end{itemize}
\end{proof}

We next address the relationships between the matrix invariants of $A$ and those of the geometric quadratic forms, extending the results from the planar case in section \ref{Section 1}.

\subsubsection{The Cayley-Hamilton Theorem and Newton's Trace Formulae}\label{Section 2.1.1}

The aim here is to separately  use the Cayley-Hamilton theorem and Newton's trace identities (see eg \cite{Hohn64, Kal00}) to obtain
expressions for the matrix invariants of $A$ in terms of those of its geometric quadratic forms akin to the $n=2$ formula \eqref{invariants n=2},
although for obvious reasons an arbitrary $n$ version of the formula \eqref{quadratic eqn} won't be possible. Here the term `matrix invariants'
refers to the elementary symmetric polynomials in the entries of the matrix, rather than in its eigenvalues. Success is only partial and neither
of the two approaches deliver closed form expressions for the  matrix invariants of $A$ exclusively in terms of those of the expansionary and rotational forms.
Nonetheless the results in one case are definitive.

Suppose that $\mtt{B}$ is a real $n\times n$ matrix with characteristic polynomial
\begin{equation}
P(\mtt{B}):=\mtt{B}^n-\text{tr}(\mtt{B})\mtt{B}^{n-1}+\text{pm}^2(\mtt{B})\mtt{B}^{n-2}\dots+(-1)^n\text{det}(\mtt{B})I_n\label{ch_poly}
\end{equation}
where $\text{pm}^a(\mtt{B})$ is the sum of the principal minors of $\mtt{B}$ of degree (or order)  $a$.\newline
If we set $\text{pm}^1(\mtt{B}):=\text{tr}(\mtt{B})$, $\text{pm}^n(\mtt{B}):=\text{det}(\mtt{B})$ and $\text{pm}^0(\mtt{B}):=1$,
these being the elementary symmetric polynomials in the matrix entries, then
\begin{equation}
P(\mtt{B}):=\mtt{B}^n -\text{pm}^1(\mtt{B})\mtt{B}^{n-1}+\dots+(-1)^k\text{pm}^k(\mtt{B})\mtt{B}^{n-k}+\dots+(-1)^n\text{pm}^n(\mtt{B})I_n.\label{ch_poly2}
\end{equation}
For $1\leq k\leq n$ Newton's trace formulae are the identities %\question{put the recurrence relations for $pm^1,\dots,pm^n$ here}
\begin{equation}
\text{tr}(\mtt{B}^k-\text{pm}^1(\mtt{B})\mtt{B}^{k-1}+\dots+(-1)^{k-1}\text{pm}^{k-1}(\mtt{B})\mtt{B}+(-1)^k\text{pm}^k(\mtt{B})I_n)=(n-k)(-1)^k\text{pm}^k(\mtt{B}).\label{Newton}
\end{equation}
These lead to the recurrence formulae for the sequence of $\text{pm}^k(\mtt{B})$ in terms of the traces of powers of $\mtt{B}.$
In the complex case this yields the usual expressions for the matrix invariants in terms of the eigenvalues. We will return to the
Newton trace formulae after exploring the use of the Cayley-Hamilton theorem in producing higher order analogues of \eqref{invariants n=2}.

Using the Cayley-Hamilton theorem on the characteristic polynomial \eqref{ch_poly2} for our endomorphism $A$ and with unit $u,v\in\E^n$ gives
\begin{equation}
A^n(u)\cdot v -\text{pm}^1(A)A^{n-1}(u)\cdot v+\dots+(-1)^{n-1}\text{pm}^{n-1}(A)A(u)\cdot v+(-1)^n\text{pm}^n(A)u\cdot v=0.\label{CH1}
\end{equation}
Notice that if $v=u$ and  $pm^n(A)\neq 0$ then the last term is independent of $u$ so  the remainder has the same value for all $u$.
And setting $u=v=b_i$ and summing $i=1\ \text{to}\ n$ recovers the Newton trace formula for $k=n.$
But, more importantly for our context, using $v=u$ and $v=R_{kl}(u)$ respectively in \eqref{CH1}, gives
\begin{propn}
\begin{equation}
(\bA^n)^e(u) -\text{pm}^1(A)(\bA^{n-1})^e(u)+\dots+(-1)^{n-1}\text{pm}^{n-1}(A)\bA^e(u)+(-1)^n\text{pm}^n(A)=0, \label{CH2}
\end{equation}
and
\begin{equation}
(\bA^n)^r_{kl}(u) -\text{pm}^1(A)(\bA^{n-1})^r_{kl}(u) +\dots +(-1)^{n-1}\text{pm}^{n-1}(A)\bA^r_{kl}(u)=0.\label{CH4}
\end{equation}
\end{propn}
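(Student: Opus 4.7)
The plan is to read off both identities directly from the Cayley-Hamilton vector identity \eqref{CH1} by specialising the auxiliary vector $v$ in the two natural ways suggested by the expansionary and rotational forms. The starting point is that Cayley-Hamilton gives $P(A) = 0$ as an endomorphism of $\E^n$, where $P$ is the characteristic polynomial in the form \eqref{ch_poly2}; evaluating on a unit vector $u$ and pairing with an arbitrary $v$ in the standard inner product produces \eqref{CH1}, which will then be exploited twice.

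For \eqref{CH2} I would set $v = u$ with $\|u\| = 1$. Each term $A^j(u) \cdot u$ is, by the defining expression \eqref{expn_defn} applied to the endomorphism $A^j$, precisely $(\bA^j)^e(u)$. The constant summand $(-1)^n \pmr^n(A)(u \cdot u)$ collapses to $(-1)^n \pmr^n(A)$, and collecting the terms delivers \eqref{CH2}.

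For \eqref{CH4} I would instead set $v = R_{kl}(u)$. Each term $A^j(u) \cdot R_{kl}(u)$ matches the definition \eqref{rttn_defn} of the rotational quadratic form of $A^j$ at $u$, so equals $(\bA^j)^r_{kl}(u)$. The constant term $(-1)^n \pmr^n(A)\, u \cdot R_{kl}(u)$ vanishes because $R_{kl}(u) = u^k b_l - u^l b_k$ gives $u \cdot R_{kl}(u) = u^k u^l - u^l u^k = 0$. This reproduces \eqref{CH4}.

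There is no real obstacle here: once Cayley-Hamilton is stated in vector form, both identities reduce to recognising the quadratic forms $(\bA^j)^e$ and $(\bA^j)^r_{kl}$ inside the iterated pairings $A^j(u) \cdot u$ and $A^j(u) \cdot R_{kl}(u)$ respectively. The only geometric input is the orthogonality $R_{kl}(u) \perp u$, which was already recorded when the quasi-rotations were introduced, and which is what allows the degree-zero term in the Cayley-Hamilton expansion to drop out cleanly in the rotational case.
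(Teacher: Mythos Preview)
Your proposal is correct and follows exactly the paper's approach: the paper derives both identities by specialising \eqref{CH1} with $v=u$ and $v=R_{kl}(u)$ respectively, relying on the orthogonality $u\cdot R_{kl}(u)=0$ to kill the constant term in the rotational case. Your write-up simply unpacks these substitutions in slightly more detail than the paper does.
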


In \eqref{CH2} and \eqref{CH4} setting $u:=b_i$ and summing over $i$ yields trace formulae for the expansionary and rotational quadratic forms respectively (in a slightly simplified notation):
\begin{cor}
\begin{align}
&\ \tr(\bA^{n^e}) -\text{pm}^1(A)\tr({\bA^{n-1}}^e)+\dots+(-1)^{n-1}\text{pm}^{n-1}(A)\tr(\bA^e)+(-1)^n(n)\text{pm}^n(A)=0,\label{tr_CH2}\\
&\ \tr(\bA^{n^r}_{kl}) -\text{pm}^1(A)\tr({\bA^{n-1}}^r_{kl}) +\dots +(-1)^{n-1}\text{pm}^{n-1}(A)\tr(\bA^r_{kl})=0\label{tr_CH4}
\end{align}
(the first of these is just the Newton trace formula for $A^n).$
\end{cor}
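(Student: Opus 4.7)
The plan is to derive the stated trace identities directly from the preceding proposition by evaluating the polynomial identities \eqref{CH2} and \eqref{CH4} at basis vectors and summing over them. The key observation is the basis-independent formula
\[
\sum_{i=1}^n \bQ(b_i)=\sum_{i=1}^n Q(b_i)\cdot b_i=\sum_{i=1}^n Q^i_i=\tr(Q),
\]
valid for any quadratic form $\bQ$ on $\E^n$ with associated self-adjoint operator $Q$ and any orthonormal basis $\bb$. This applies equally to $(\bA^m)^e$ and to each $(\bA^m)^r_{kl}$, since by construction both are quadratic forms whose traces in the corollary are understood as those of the underlying self-adjoint operators.

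First I would put $u=b_i$ in \eqref{CH2} and sum over $i=1,\dots,n.$ Each scalar coefficient $\pmr^k(A)$ is a matrix invariant independent of $i$, so it passes through the summation, and each term $(\bA^{n-k})^e(b_i)$ collapses to $\tr((A^{n-k})^e)$ by the identity above. The only point requiring care is the constant term $(-1)^n\pmr^n(A)$, which is independent of $u$: summing the $n$ copies of it produces the factor of $n$ displayed in \eqref{tr_CH2}. As the corollary remarks, \eqref{tr_CH2} coincides with Newton's trace formula \eqref{Newton} for $k=n,$ because $\tr((A^m)^e)=\tr\bigl(\onehalf(A^m+(A^m)^\ast)\bigr)=\tr(A^m).$

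The second identity \eqref{tr_CH4} follows by exactly the same manipulation applied to \eqref{CH4}. There is no free constant term in that case, so the sum over $i$ simply replaces each $(\bA^{n-k})^r_{kl}(b_i)$ by $\tr((A^{n-k})^r_{kl})$ with no correction, yielding \eqref{tr_CH4}.

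There is essentially no conceptual obstacle: the result is a mechanical consequence of the trace identity $\sum_i\bQ(b_i)=\tr(Q)$ applied term by term to polynomial identities already established in the preceding proposition. The only bookkeeping point is distinguishing the constant term in \eqref{CH2}, which contributes the factor $n$, from the purely polynomial identity \eqref{CH4} where no such factor appears.
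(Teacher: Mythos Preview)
Your proof is correct and matches the paper's approach exactly: the corollary is obtained precisely by setting $u=b_i$ in \eqref{CH2} and \eqref{CH4} and summing over $i$, with the basis-free identity $\sum_i \bQ(b_i)=\tr(Q)$ doing the work. Your additional remarks on the factor of $n$ from the constant term and the identification $\tr((A^m)^e)=\tr(A^m)$ are accurate and make the one-line derivation explicit.
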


Equations \eqref{tr_CH2},\eqref{tr_CH4} are candidates for linear equations for the invariants of $A$ in terms of the real eigenvalues of the expansions and rotations of powers of $A$.
Given the linear independence of the set $\{[A^e]_\bb,[A^r_{kl}]_\bb\}$ it is clear that this generic system is over-determined for $n\ge 3.$ Here it is for $n=2$:

\begin{align*}
&\ \tr({\bA^2}^e) -\tr(A)\tr(\bA^e)+2\det(A)=0,\\
&\ \tr({\bA^2}^r) -\tr(A)\tr(\bA^r)=0.
\end{align*}
Using the definitions of ${\bA^2}^e, {\bA^2}^r$ and the basis $\{\hat u,\hat u^\bot\},$ these identities can easily be shown to produce the results \eqref{invariants n=2}.

As an aside, we could hope to express all the coefficients in \eqref{tr_CH2}, \eqref{tr_CH4} in terms of the invariants of $\bA^e$ and $\bA^r_{kl}$ alone
(and not the higher powers) but this is unrealistic given the following recursions derived from \eqref{main result}:
\begin{align*}
& A^{m+1}(u)=A(A^m(u))=(\bA^m)^e(u)A(u)+\sum_{k<l}(\bA^m)^r_{kl}(u)A(R_{kl}(u))\\%\notag \\
\implies & (\bA^{m+1})^e(u)=(\bA^m)^e(u)\bA^e(u)+\sum_{k<l}(\bA^m)^r_{kl}(u)(\bA^T)^r_{kl}(u),%\label{recur_1}
\end{align*}
and similarly,
\begin{equation}
(\bA^{m+1})^r_{pq}(u)=(\bA^m)^e(u)\bA^r_{pq}(u)+\sum_{k<l}(\bA^m)^r_{kl}(u)A(R_{kl}(u))\cdot R_{pq}(u).\label{recur_2}
\end{equation}
The terms $A(R_{kl}(u))\cdot R_{pq}(u)$ in \eqref{recur_2} look problematic but putting $u=b_p$, for example, gives the diagonal terms needed for the trace
\begin{align*}
(\bA^{m+1})^r_{pq}(b_p)=(\bA^m)^e(b_p)\bA^r_{pq}(b_p)&+(\bA^m)^r_{pq}(b_p)\bA^e(b_q)\\ %\notag \\
&+\sum_{p<l<q}(\bA^m)^r_{pl}(b_p)\bA^r_{lq}(b_l)\\ % \notag \\
&-\sum_{p<q<l}(\bA^m)^r_{pl}(b_p)\bA^r_{ql}(b_l)\\ % \notag \\
&-\sum_{k<p<q}(\bA^m)^r_{kp}(b_p)\bA^r_{kq}(b_k).
\end{align*}
Nonetheless, it seems unlikely from this result that either $\tr((\bA^{m+1})^e)$ or $\tr((\bA^{m+1})^r_{pq})$ can be expressed in terms of the powers of
traces of lower order rotations and expansions.

When $A$ is normal, the situation is radically simplified by proposition \ref{normal II}.
\begin{cor}
When the endomorphism $A$ is normal the system \eqref{CH2},\eqref{CH4} has generic rank $n$ for the matrix invariants of $A$.
\end{cor}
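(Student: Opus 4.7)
The plan is to reduce the system to evaluations at the eigenvalues of $A$ by exploiting the canonical basis provided by proposition \ref{normal II}, and then to recognise these evaluations as the real and imaginary parts of the complex Cayley--Hamilton identities.

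By proposition \ref{normal II}(b), I would choose an orthonormal basis $\bb$ in which $D:=[A^e]_\bb$ and $S^2$ are simultaneously diagonal, so that $[A]_\bb=D+S$ with $DS=SD$. After reordering $\bb$ so that the non-trivial $2\times 2$ blocks of $S$ occupy consecutive index pairs $(2m{-}1,2m)$ (and the remaining basis vectors span $\ker S$), $S$ acts on each block as $\bigl(\begin{smallmatrix}0 & t\\ -t & 0\end{smallmatrix}\bigr)$ with $t>0$. Setting $\mu_i:=d_i+it_i$ with $d_i:=D_{ii}$ and $-t_i^2:=(S^2)_{ii}$, the $\mu_i$ (with conjugate-pair repetition on block pairs) exhaust the eigenvalues of $A$. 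A binomial expansion using $[D,S]=0$ and $S^2|_{\text{block}}=-t^2I_2$ gives, on each block,
\begin{equation*}
[A^m]_\bb\bigm|_{\text{block}}=\begin{pmatrix}\Re(\mu^m)&\Im(\mu^m)\\-\Im(\mu^m)&\Re(\mu^m)\end{pmatrix},
\end{equation*}
whence $(\bA^m)^e(b_i)=\Re(\mu_i^m)$ for every $i$ and $(\bA^m)^r_{k,l}(b_k)=-\Im(\mu_k^m)$ when $(k,l)$ is a block pair (and vanishes otherwise).

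Substituting these values into \eqref{CH2} with $u=b_i$ delivers the real part of the complex identity $P(\mu_i)=0$, while substituting into \eqref{CH4} with $u=b_k$ and $(k,l)$ a block pair delivers $\Im(P(\mu_k))=0$, where $P$ is the characteristic polynomial of $A$ viewed as a linear expression in the unknowns $\pmr^1(A),\ldots,\pmr^n(A)$. In total we obtain exactly $r+2s=n$ real linear equations in those $n$ unknowns, where $r$ is the number of real eigenvalues of $A$ and $s$ is the number of conjugate pairs. Viewed as an $n\times n$ coefficient matrix, this is obtained from the complex Vandermonde matrix on the $\mu_i$ by the invertible row-substitution that replaces each conjugate-pair row $(\text{row}_\mu,\text{row}_{\bar\mu})$ by $(\Re\text{-row},\Im\text{-row})$; a short calculation shows this substitution has constant Jacobian $(i/2)^s\neq 0$ per pair, so the real coefficient matrix has determinant $(i/2)^s\prod_{i<j}(\mu_j-\mu_i)$ up to sign. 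Hence the system attains full rank $n$ precisely when the $\mu_i$ are distinct --- the generic condition on a normal $A$ --- establishing the claim.

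The main obstacle is the middle step: correctly evaluating $(\bA^m)^r_{k,l}(b_k)$ in the canonical basis and seeing that it captures $\Im(\mu_k^m)$ exactly for the block-paired indices and vanishes for all others. Once the binomial computation is in hand, identifying the resulting linear equations with the real and imaginary parts of the $n$ complex Cayley--Hamilton identities $P(\mu_i)=0$ is immediate, and the rank count reduces to the standard Vandermonde observation under the generic assumption of simple spectrum.
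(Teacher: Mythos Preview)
Your argument is correct and follows the same overall strategy as the paper: pass to the canonical basis of proposition \ref{normal II}, evaluate \eqref{CH2} and \eqref{CH4} at the basis vectors, and use the binomial expansion of $(D+S)^p$ with $DS=SD$ and $S^2$ diagonal. The paper carries out the same computation but leaves the result in the form
\[
(\bA^p)^r_{kl}(b_k)=S^l_k\sum_{\substack{m=0\\ p-m\ \text{odd}}}^{p-1}\binom{p}{p-m}(\lambda^e_l)^m\bigl((S^2)^l_l\bigr)^{\frac{p-m-1}{2}},
\]
and then simply asserts that the combined system \eqref{CH2},\eqref{CH4} has generic rank $n$, without an explicit rank argument. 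Your recognition that these binomial sums are exactly $\Re(\mu_i^m)$ and $-\Im(\mu_k^m)$, so that the selected equations are the real and imaginary parts of $P(\mu_i)=0$, is a genuine sharpening: it reduces the rank claim to the Vandermonde determinant on the spectrum of $A$, making the genericity condition (simple spectrum) completely transparent. One small notational point: with your definition $-t_i^2=(S^2)_{ii}$ the sign of $t_i$ is undetermined, so on a block pair you should fix $t_k=t$, $t_{k+1}=-t$ to have the $\mu_i$ actually enumerate the eigenvalues of $A$; otherwise the Vandermonde step needs the obvious conjugation remark. This does not affect the validity of the argument.
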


\begin{proof}
Using a basis $\bb$ which simultaneously diagonalises $[A^e]_\bb,\dots,[(A^n)^e]_\bb$ equation \eqref{CH2} generates the system
\begin{equation}
\lambda^{ne}_i -\text{pm}^1(A)\lambda^{(n-1)e}_i+\dots+(-1)^{n-1}\text{pm}^{n-1}(A)\lambda^e_i+(-1)^n\text{pm}^n(A)=0,\ i=1,\dots,n \label{normal_III}
\end{equation}
where the $\lambda^{ke}_i$ are eigenvalues of $(\bA^k)^e$ and $\lambda^e_i$ are eigenvalues of $\bA^e.$ The rank of \eqref{normal_III} is equal to the
number of distinct eigenvalues of $\bA^e$, remembering that this is strictly less than $n$ because $A$ is normal and assumed not symmetric.\newline
The situation with \eqref{CH4} in the normal case is more complicated. Set $[A]_\bb=D+S$ with $DS=SD$ and recall that (normal or not) $(\bA^p)^r_{kl}(b_m)$ is
zero unless $m=k,l$, that $(\bA^p)^r_{kl}(b_k)=(\bA^p)^r_{kl}(b_l)=(A^{p^{\skew}})^l_k$ and that $S^2$ is diagonal; using $A^p=(D+S)^p$
so that $A^{p^{\skew}}=\onehalf((D+S)^p-(D-S)^p),$ it is straightforward to show that
\begin{align*}
(\bA^p)^r_{kl}(b_k)&=\sum_{\stackrel{m=0,}{p-m\ \text{odd}}}^{p-1}\binom{p}{p-m}(D^l_l)^m(S^{p-m-1})^l_lS^l_k\\
&=\sum_{\stackrel{m=0,}{p-m\ \text{odd}}}^{p-1}\binom{p}{p-m}(\lambda^e_l)^m((S^2)^l_l)^\frac{p-m-1}{2}S^l_k.
\end{align*}
Then putting $u=b_k$ in \eqref{CH4} gives
\begin{align}
0&=\sum_{p=1}^{n}(-1)^{n-p}\text{pm}^{n-p}(A)(\bA^p)^r_{kl}(b_k)\notag\\
&=S^l_k\sum_{p=1}^{n}\left((-1)^{n-p}\text{pm}^{n-p}(A)\left(\sum_{\stackrel{m=0,}{p-m\ \text{odd}}}^{p-1}\binom{p}{p-m}(\lambda^e_l)^m((S^2)^l_l)^\frac{p-m-1}{2}\right)\right),\label{normal_IV}
\end{align}
remembering that $S^l_k$ is only non-zero when $\lambda^e_l=\lambda^e_k.$ So, in this case where $A$ is normal, the system \eqref{normal_III},\eqref{normal_IV} has
combined generic rank $n$ for the $n$ matrix invariants $\text{pm}^k(A).$
\end{proof}

Now we return to the Newton trace formulae \eqref{Newton} applied to $[A]_\bb=[A^{\sym}]_\bb+[A^{\skew}]_\bb$ on $\E^n$. By way of exploration consider the $k=2$ formula
\begin{align*}
&\tr(A^2)-\pmr^1(A)\tr(A)+2\pmr^2(A)=0\\
\iff\ &\tr({A^{\sym}}^2)+\tr({A^{\skew}}^2)+2\tr(A^{\sym}A^{\skew})-\tr(A^{\sym})^2+2\pmr^2(A)=0\\
\iff\ &\tr({A^{\sym}}^2)+\tr({A^{\skew}}^2)-\tr(A^{\sym})^2+2\pmr^2(A)=0.
\end{align*}
Applying the same trace formula to $[A^{\sym}]_\bb$ and $[A^{\skew}]_\bb$ and removing the leading terms immediately above gives
\begin{equation*}
\pmr^2(A)=\pmr^2(A^{\sym})+\pmr^2(A^{\skew}).
\end{equation*}
But more importantly for our goal,
\begin{equation}
\pmr^2(A)=\pmr^2(A^e)+\oneqtr\sum_{k<l}\tr(A^r_{kl})^2,
\end{equation}
using $\tr({A^{\skew}}^2)=-\onehalf\sum_{kl}\tr(A^r_{kl})^2$ arising from \eqref{skew decomp 2}.
However, attempting to create a similar identity for $\pmr^3(A)$ is obstructed by the term $\tr(A^{\sym}{A^{\skew}}^2).$
This can be dealt with by using a basis which diagonalises $A^\sym$ ($[A]_\bb=D+S$) so that the product explicitly contains
the eigenvalues and traces of the quadratic forms, similarly for the higher order terms that appear in the identities for $\pmr^k(A)$.
But in general this is unproductive, see the appendix for some further details. The full set of identities for $\E^3$ are given in the examples section.

Finally as an aside, recall that $\tr(AA^*)$ is not an elementary symmetric polynomial in the entries of $[A]_\bb$ but is an
invariant under change of basis. Can it, and other power sum symmetric polynomials be expressed in terms of rotational and expansionary invariants?
To hint at the answer we give without proof the following identity on $\E^n.$
\[
n\tr(AA^*)=2\sum_{k<l}\tr({A^r_{kl}}^2)+\tr(A^e)^2=-4\sum_{k<l}\pmr^2(A^r_{kl})+2\sum_{k<l}\tr(A^r_{kl})^2+\tr(A^e)^2.
\]

\section{Examples and applications}\label{Section 3}

\subsection{$\E^3$}\label{Section 3.1}\
Relative to an arbitrary orthonormal basis $\bb$ the matrix representations of the expansionary and rotational quadratic forms of an endomorphism $A$ on $\E^3$ are
\begin{align*}
[A^e]_\bb = [A^{\sym}]_\bb &=\left( \begin{array}{ccc} A^1_1 & \onehalf(A^1_2+A^2_1) & \onehalf(A^1_3+A^3_1) \\ \\
\onehalf(A^1_2+A^2_1) & A^2_2 & \onehalf(A^2_3+A^3_2) \\ \\
\onehalf(A^1_3+A^3_1) & \onehalf(A^2_3+A^3_2) & A^3_3 \end{array}\right),\\ \\
[A^r_{12}]_\bb &= \left( \begin{array}{ccc} A^2_1 & \onehalf(A^2_2-A^1_1) & \onehalf A^2_3 \\ \\
\onehalf(A^2_2-A^1_1) & -A^1_2 & -\onehalf A^1_3 \\ \\
\onehalf A^2_3 & -\onehalf A^1_3 & 0 \end{array}\right),\\ \\
[A^r_{13}]_\bb &= \left( \begin{array}{ccc} A^3_1 & \onehalf A^3_2 & \onehalf(A^3_3-A^1_1) \\ \\
\onehalf A^3_2 & 0 & -\onehalf A^1_2 \\ \\
\onehalf(A^3_3-A^1_1) & -\onehalf A^1_2 & -A^1_3 \end{array}\right),\\ \\
[A^r_{23}]_\bb &= \left( \begin{array}{ccc} 0 & \onehalf A^3_1 & -\onehalf A^2_1 \\ \\
\onehalf A^3_1 & A^3_2 & \onehalf (A^3_3-A^2_2) \\ \\
-\onehalf A^2_1 & \onehalf(A^3_3-A^2_2) & -A^2_3 \end{array}\right).
\end{align*}

In a basis which diagonalises $A^e$:
\begin{align*}
[A^e]_\bb = [A^{\sym}]_\bb &=\left( \begin{array}{ccc} \lambda^e_1 & 0 & 0 \\ \\
0 & \lambda^e_2 & 0 \\ \\
0 & 0 & \lambda^e_3 \end{array}\right),\\ \\
[A^r_{12}]_\bb &= \left( \begin{array}{ccc} -A^1_2 & \onehalf(\lambda^e_2-\lambda^e_1) & \onehalf A^2_3 \\ \\
\onehalf(\lambda^e_2-\lambda^e_1) & -A^1_2 & -\onehalf A^1_3 \\ \\
\onehalf A^2_3 & -\onehalf A^1_3 & 0 \end{array}\right),\ \tr(A^r_{12})=-2A^1_2 \\ \\
[A^r_{13}]_\bb &= \left( \begin{array}{ccc} -A^1_3 & -\onehalf A^2_3 & \onehalf(\lambda^e_3-\lambda^e_1) \\ \\
-\onehalf A^2_3 & 0 & -\onehalf A^1_2 \\ \\
\onehalf(\lambda^e_3-\lambda^e_1) & -\onehalf A^1_2 & -A^1_3 \end{array}\right),\ \tr(A^r_{13})=-2A^1_3 \\ \\
[A^r_{23}]_\bb &= \left( \begin{array}{ccc} 0 & -\onehalf A^1_3 & \onehalf A^1_2 \\ \\
-\onehalf A^1_3 & -A^2_3 & \onehalf (\lambda^e_3-\lambda^e_2) \\ \\
\onehalf A^1_2 & \onehalf(\lambda^e_3-\lambda^e_2) & -A^2_3 \end{array}\right),\ \tr(A^r_{23})=-2A^2_3.
\end{align*}

We have $[A]_\bb=D+S$ with $D=\diag(\lambda^e_1,\lambda^e_2,\lambda^e_3)$ and
\[
 S=\onehalf(\tr(A^r_{12})[R_{12}]+\tr(A^r_{13})[R_{13}]+\tr(A^r_{23})[R_{23}])=-A^1_2[R_{12}]-A^1_3[R_{13}]-A^2_3[R_{23}].
\]
Starting with the two Newton trace formulae
\begin{align*}
&\ \tr(\mtt{B}^2)-\pmr^1(\mtt{B})\tr(\mtt{B})+2\pmr^2(\mtt{B})=0,\\
&\ \tr(\mtt{B}^3)-\pmr^1(\mtt{B})\tr(\mtt{B}^2)+\pmr^2(\mtt{B})\tr(\mtt{B})-3\pmr^3(\mtt{B})=0,
\end{align*}
we note first that
\[
\tr(S^2)+2\pmr^2(S)=0\ \iff\ \tr(S^2)=-2((A^1_2)^2+(A^1_3)^2+(A^2_3)^2)=-\onehalf\sum_{k<l}\tr(A^r_{kl})^2
\]
and
\[
\tr(D^2)-\tr(D)^2+2\pmr^2(D)=0.
\]
Also (by direct calculation)
\[
\tr(DS^2)=-\oneqtr\left[\lambda^e_1\left(\tr(A^r_{12})^2+\tr(A^r_{13})^2\right)+\lambda^e_2\left(\tr(A^r_{12})^2+\tr(A^r_{23})^2\right)+\lambda^e_3\left(\tr(A^r_{13})^2+\tr(A^r_{23})^2\right)\right].
\]
Then the two  trace formulae for $[A]$ give
\begin{align*}
\pmr^2(A)&=\onehalf(\tr(D)^2-\tr(D^2)-\tr(S^2))= \pmr^2(D)+\pmr^2(S)\ \text{and}\ \\
\det(A)&=\onethird\left(\tr(D^3)+\onehalf\tr(D)^3\right)+\tr(S^2D)-\onehalf\tr(D)(\tr(S^2)+\tr(D^2))
\end{align*}
with each term involving the eigenvalues of the expansionary form and traces of the rotational forms. The elaboration of  \eqref{tr_CH2} and \eqref{tr_CH4} are too complex to be informative.

The following canonical form on $\E^3$ serves as an illustrative, non-normal, example of the determination of the eigenspaces of $A$ from the common zeros of the rotational forms.
In an arbitrary orthonormal basis $\bb$ suppose that
\[
[A]_\bb = \left( \begin{array}{ccc} \lambda & 1 & 0 \\
0 & \lambda & 0 \\
0 & 0 & \mu \end{array}\right),
\]
with $\lambda>\mu.$ Then
\[
[A^e]_\bb =\left( \begin{array}{ccc} \lambda & \onehalf & 0 \\
\onehalf & \lambda & 0 \\
0 & 0 & \mu \end{array}\right),
\]
and $[A^e]_\bb$ is diagonalised in the orthonormal basis
\[
\bb'_1=\frac{1}{\sqrt{2}}(\bb_1+\bb_2),\ \bb'_2=\frac{1}{\sqrt{2}}(\bb_1-\bb_2),\ \bb'_3=\bb_3
\]
with

\[
[A^e]_{\bb'} =\left( \begin{array}{ccc} \lambda +\onehalf & 0 & 0 \\
0 & \lambda-\onehalf & 0 \\
0 & 0 & \mu \end{array}\right),
\]
and
\[
[A]_{\bb'} = \left( \begin{array}{ccc} \lambda+\onehalf & -\onehalf & 0 \\
\onehalf & \lambda-\onehalf & 0 \\
0 & 0 & \mu \end{array}\right).
\]
The rotational quadratic forms have representations and eigenvalues
\begin{align*}
[A^r_{12}]_{\bb'} &= \left( \begin{array}{ccc} \onehalf & -\onehalf & 0 \\
-\onehalf & \onehalf & 0 \\
0 & 0 & 0 \end{array}\right),\ \{0,1\}\\
[A^r_{13}]_{\bb'} &= \left( \begin{array}{ccc} 0 & 0 & \onehalf(\mu-\lambda-\onehalf) \\
0 & 0 & 0 \\
\onehalf(\mu-\lambda-\onehalf) & 0 & 0 \end{array}\right),\ \{\ 0,\pm |\onehalf(\mu-\lambda-\onehalf)|\ \}\\
[A^r_{23}]_{\bb'} &= \left( \begin{array}{ccc} 0 & 0 & -\frac{1}{4} \\
0 & 0 & \onehalf (\mu-\lambda + \onehalf) \\
-\frac{1}{4} & \onehalf (\mu-\lambda + \onehalf) & 0 \end{array}\right),\ \{\ 0,\pm\onehalf\sqrt{\oneqtr+(\mu-\lambda+\onehalf)^2}\ \}.
\end{align*}
Clearly $\Sp\{\bb'_3\}$ is a common zero of the rotational forms and the only other common zero is $\Sp\{\bb'_1+\bb'_2\}$;
however their direct sum is not a common zero (see proposition \ref{Q forms1}) and so they belong to distinct eigenvalues of $A$,
each with geometric  multiplicity one (which, of course, can be directly verified from the original canonical form).
The corresponding eigenvalues of $A$ can be read directly from $[A^e]_\bb$ because of the choice of basis.

Notice also that the eigenvalues of $A$ lie between the maximum and minimum eigenvalues of $\bA^e$ and that
\[
[A^{\skew}]_{\bb'}=\onehalf\tr[A^r_{12}]_{\bb'}[R_{12}]
\]
with eigenvalues $0,\pm\onehalf i$ allowing both parts of  theorem \ref{Bromwich 06} to be verified.

\subsection{The Frenet shape map}\label{Section 3.2}\

In what follows we will examine  the {\em shape map} of a vector field -- an  endomorphism of tangent spaces to $\E^3, $ using the orthonormal Frenet frame
as the basis $\mathbf{b}$ of previous sections. This serves as a model for applications of the main theorem  \ref{thm1} to the geometry of connections.

Suppose that $T$ is a unit flow field, smooth and non-singular on some open subset $U$ of $\E^3$ and whose integral curves have positive curvature.
We will use natural coordinates $(x^a)$ and the flat connection $\nabla$ on $\E^3$ with
\[
\nabla_XY=X^a\pd{Y^b}{x^a}\pd{}{x^b}
\]
for vector fields $X=X^a\pd{}{x^a},\ Y=Y^a\pd{}{x^a}.$ At each point in $U$ we can define the Frenet frame, $\{T,N,B\},$ in the usual way for
the unique, arc length-parametrised integral curve of $T$ through that point (see eg \cite{St83}). The smoothness assumptions on the flow mean that
the Frenet formulae can be defined on all of $U$  with curvature and torsion, $\kappa, \tau$:

\begin{equation}\label{E:FFF}\begin{aligned}
\nabla_T T=& &\kappa N, &\\
\nabla_T N=& -\kappa T & & +\tau B,\\
\nabla_T B=& &-\tau N. &
\end{aligned}\end{equation}

With any linear connection $\hnabla$, with torsion $\hat T$ (not to be confused with $T$ or $\tau$), on a manifold $M$ there is a corresponding $(1,1)$ tensor,
$A_X:=\hnabla X+\hat T(X,\cdot),$ called the {\em shape map} (see \cite{JP01} and Kobayashi and Nomizu, Volume 1, p 235 \cite{KN63})
associated to any vector field X (on each tangent space $T_pM,$ $A_X$ acts as a vector space endomorphism). In our case $\hat T=0$ and the following hold
\begin{align*}
A_X(Y)=&\nabla_YX,\\
[X,Y]=&A_Y(X)-A_X(Y).
\end{align*}

Using these the Frenet formulae can be rephrased as
\begin{equation}\label{E:SMF}\begin{aligned}
A_T(T)=& &\kappa N&,\\
A_T(N)=& -\kappa T & & +\tau B &- [T,N]&,\\
A_T(B)=& &-\tau N & &- [T,B]&.
\end{aligned}\end{equation}
This produces the matrix representation of the shape map $A_T$ relative to the orthonormal Frenet basis $F:=\{T,N,B\}.$

\[
[A_T]_F = \left( \begin{array}{ccc} 0 & -\kappa & 0 \\
\kappa & 0 & -\tau+\sigma \\
0&\tau-\sigma&0 \end{array}\right).
\]

This matrix representation is skew symmetric because  $[T,N]\in\Sp\{B\}$, $[T,B]\in\Sp\{N\}$ and  $\sigma:=[T,N]\cdot B=-[T,B]\cdot N;$
this includes the case $[T,N]=0$ where the osculating planes (at each point these are spanned by $T$ and $N$) are integrable (see also \cite{Costa13}).

We can now compute the expansion and rotations induced by the flow through the action of $A_T$. The quadratic form $\bA^e_T=0$ because of the skew symmetry just noted,
and is to be expected since $T$ is unit.
Denoting the rotational quadratic forms of $A_T$ by, for example, $\bA^r_{(TN)},$ in the case of the osculating 2-planes, $\Sp\{T,N\},$ we obtain

\[
[A^r_{(TN)}]_F = \left( \begin{array}{ccc} \kappa & 0 & \frac{1}{2}(\sigma-\tau) \\
0 & \kappa & 0 \\
\frac{1}{2}(\sigma-\tau)&0&0 \end{array}\right),
\]

\[
[A^r_{(TB)}]_F = \left( \begin{array}{ccc} 0 & \frac{1}{2}(\tau-\sigma) & 0 \\
\frac{1}{2}(\tau-\sigma) & 0 & \frac{1}{2}\kappa \\
0&\frac{1}{2}\kappa&0 \end{array}\right),
\]

\[
[A^r_{(NB)}]_F = \left( \begin{array}{ccc} 0 & 0 & -\frac{1}{2}\kappa \\
0 & \tau-\sigma & 0 \\
-\frac{1}{2}\kappa&0&\tau-\sigma \end{array}\right).
\]
Observe that the curvature $\kappa$ is the necessarily common rotation of $T$ and $N$ in the osculating plane at each point
and that $\tau-\sigma$ is the rotation at each point of $N$ and $B$ in the normal plane. As expected the three rotational quadratic forms have a single common zero,
namely $(\sigma-\tau)T-\kappa B$, the generator of the kernel of $A_T.$ In the case of a single curve only the torsion accounts for the rotation (twist) of the normal plane,
so flow design with $\tau=\sigma$ may be of further interest (see, for example, \cite{W08}).

\section{Discussion and open questions}
The geometric thrust of this paper is the novel decomposition of an endomorphism into its expansionary and rotational parts using quadratic forms.
This allows the identification of eigendirections as the common  zeros of the rotational forms with the geometric multiplicity being given by the dimensions of those zero-valued subspaces.
The eigenvalues are produced by evaluation of the corresponding expansions. The relationship between the eigenvalues of the endomorphism and
those of its geometric quadratic forms is less clear cut and probably less useful. The applications to differential geometry and differential equations will almost
certainly involve evolution equations for the quadratic form invariants.

Some open questions have occurred to me, either because I have been unable to answer them or because they have been peripheral to task at hand. Here are a few.

The quasi rotations give a natural decomposition of the orthogonal complement of a one dimensional subspace. Are there useful analogues for higher dimensional subspaces?

While the matrix invariants of an endomorphism can be constructed from the corresponding real quadratic form invariants,
apart from the two dimensional case rotations seem to be represented by the sum of their traces alone -- are there roles for the higher invariants of the rotations?
And what can be said about the signatures of the rotational forms defined by different orthonormal bases?

What also can be said about endomorphisms whose average expansion is attained along an eigendirection? Or whose average rotations are zero?
How can 2-planes of extremal rotation be identified, independent of basis choice?
Finally, how useful would it be to build a classification scheme for endomorphisms characterised by the signatures of the quadratic forms,
assuming that these can be well-defined independent of the choice of basis.

\begin{appendix}
\section{Appendix}
\subsection{Some definitions and identities}
For the sake of brevity in what follows $R_{ij}$ is used for both the quasi-rotations as maps and as matrices; it is implicit that $i<j$ in $R_{ij}.$
$\delta^{i_1\dots i_m}_{j_1\dots j_m}$ is the generalised Kronecker delta and $\varepsilon_{i_1\dots i_m}$ is the Levi-Civita symbol; the epsilon-delta identity is
\[
\varepsilon_{i_1\dots i_ki_{k+1}\dots i_n}^{i_1\dots i_kj_{k+1}\dots j_n}=k!\ \delta^{j_{k+1}\dots j_n}_{i_{k+1}\dots i_n}.
\]

\begin{eqnarray*}
(R_{pq})^i_j &=-\delta^p_i\delta^q_j+\delta^q_i\delta^p_j &= -\delta^{pq}_{ij},\\
{(R^2_{pq})}^i_j &=(R_{pq})^i_k(R_{pq})^k_j &=\sum_k\delta^{pq}_{ik}\delta^{pq}_{kj}.
\end{eqnarray*}

For non-zero $u$ and unit $v \in \E^n:$
\begin{eqnarray*}
u\cdot R_{pq}(u) =0,&\ u=\frac{-1}{n-1}\displaystyle{\sum_{r<s}}R^2_{rs}(u),& \\ \\
(R_{pq}(u))^i =\delta^i_qu^p-\delta^i_pu^q,&\  ({R^2_{pq}}(u))^i =-\delta^i_pu^p-\delta^i_qu^q\ \text{(no sum)}.&
\end{eqnarray*}

From proposition \ref{magic propn},
\begin{align*}
u&=(u\cdot v)v+\sum_{k<l}(u\cdot v^\bot_{kl})v^\bot_{kl},\ v^\bot_{kl}:=R_{kl}(v),\\
\|u\|^2&=(u\cdot v)^2+\sum_{k<l}(u\cdot v^\bot_{kl})^2.
\end{align*}

For an endomorphism $A$ on $\E^n:$
\[
\bA^e(v):=A(v)\cdot v,\ \bA^r_{kl}(v):= A(v)\cdot R_{kl}(v).
\]
From theorem \ref{main result} and corollary \ref{main thm cor},
\begin{align*}
A(u)&=\bA^e(\hat u)u+\sum_{k<l}\bA^r_{kl}(\hat u)R_{kl}(u),\\
\|A(\hat u)\|^2&=\mathbf{A}^e(\hat u)^2+\sum_{k<l}{\mathbf{A}^r_{kl}(\hat u)}^2.
\end{align*}

From proposition \ref{cor comms},
\begin{align*}
A^{\sym}(u)&=\bA^e(\hat u)u+\onehalf\sum_{k<l}\left([A,R_{kl}](\hat u)\cdot\hat u\right) R_{kl}(u),\\
A^{\skew}(u)&=-\onehalf\sum_{k<l}\left(\{A,R_{kl}\}(\hat u)\cdot \hat u\right) R_{kl}(u).
\end{align*}

From proposition \ref{cor comms II},
\begin{align*}
\bA^e(\hat u)&=A^{\sym}(\hat u)\cdot\hat u,\\
\bA^r_{kl}(\hat u)&=\onehalf[A^{\sym},R_{kl}](\hat u)\cdot \hat u-\onehalf\{A^{\skew},R_{kl}\}(\hat u)\cdot \hat u.
\end{align*}

\subsection{Remarks on the identities arising from Newton trace formulae}
%\color{red}
Consider the case where the matrix $\mtt{B}$ decomposes additively into  nontrivial symmetric and skew-symmetric parts, and furthermore assume that the symmetric part is diagonal
since this can be achieved by an orthogonal transformation, that is $\mtt{B}=\mtt{D}+\mtt{S}$ with $\mtt{D}$ diagonal and $\mtt{S}$ skew-symmetric. So consider $\text{tr}((\mtt{D}+\mtt{S})^k)$:
\begin{align}
\text{tr}((\mtt{D}+\mtt{S})^k)=\text{tr}(\mtt{D}^k &+(\text{terms with $k-1$ occurrences of $\mtt{D}$})\label{trace expn}\\
&+\dots+(\text{terms with $1$ occurrence of $\mtt{D}$})+\mtt{S}^k).\notag
\end{align}
Using the property $\text{tr}(\mtt{B^T})=\text{tr}(\mtt{B}),$ the traces of terms in this expansion with an odd occurrence of $\mtt{S}$ can be shown to sum to zero as follows:
for each term $\mathtt{M_1M_2\dots M_k}$
the reverse, $\mathtt{M_k\dots M_1}$, also appears exactly once ($\mtt{M_i}$ is either $\mtt{D}$ or $\mtt{S}$) unless it is equal to its reverse.
Now $\text{tr}(\mtt{M_k}\dots \mtt{M_1})=\text{tr}((\mtt{M_k}\dots \mtt{M_1})^\mtt{T}) =\text{tr}(\mtt{M^T_1}\dots \mtt{M^T_k})
= -\text{tr}(\mtt{M_1}\dots \mtt{M_k}) $
since $\mtt{S}$ appears an odd number of times. So in this case  $\tr(\mtt{M_1}\dots \mtt{M_k})  +\tr(\mtt{M_k}\dots \mtt{M_1})=0$
removing all terms with an odd occurrence of $\mtt{S};$ this trivially includes self-reverse case.\newline
Of those terms with a fixed even occurrence of $\mtt{S}$ the traces are the same for all cyclic permutations and all reversals of each term $\mtt{M_1}\dots \mtt{M_k}$
because of the properties $\tr(\mtt{B}_1\mtt{B}_2)=\tr(\mtt{B}_2\mtt{B}_1)$ and $\tr(\mtt{B^T})=\tr(\mtt{B})$
but otherwise different. For example, in general $\tr(\mathtt{DSDS})~\neq~\tr(\mathtt{DDSS}).$ They are equal when $\mtt{B}$ is normal. \newline
%Note that all this remains true for the general decomposition of $\mtt{B}=\mtt{B}^{\sym}+\mtt{B}^{\skew}.$

Bearing these properties in mind, the expansion \eqref{trace expn} for $k=4$ is, for example,
\begin{align*}
\tr(\mtt{B}^4)=\tr (\mtt{(D+S)^4})&= \tr (\mtt{D^4+S^4})+\tr (\mtt{DDDS+DDSD+DSDD+SDDD})\\
&+ \tr (\mtt{DDSS+DSDS+DSSD+SDSD+SSDD+SDDS})\\
&+ \tr (\mtt{DSSS+SDSS+SSDS+SSSD})\\
&=\tr (\mtt{D^4})+\tr (\mtt{S^4})+4\tr  (\mtt{DDSS}) + 2\tr(\mtt{SDSD}).
\end{align*}
When $\mtt{B}$ is normal
\[
\tr(\mtt{B}^4)=\tr (\mtt{D^4})+\tr (\mtt{S^4})+6\tr(\mtt{(DS)^2}).
\]

Now we are in a position to use the Newton trace formulae to express the matrix invariants of our endomorphism $A$ in terms of the traces of
powers and products of $D$ and $S$ from \eqref{MC1},\eqref{MC2} which,
at least for traces of powers of $D$ and $S,$ can in turn be written in terms of their matrix invariants via the trace formula.\newline
For example, for $n=4$ we have,
\begin{equation*}
\det(A)=\det(D)+\det(S)-\tr(DDSS)-\onehalf\tr(SDSD)+\tr(SDS)\tr(D)+\text{pm}^2(D)\text{pm}^2(S).
\end{equation*}
And, for comparison with \eqref{invariants n=2} when $n=2$,
\[
\det(A)=\det(D)-\onehalf\tr(S^2).
\]
Of course, the right hand side could  be expressed entirely in terms of traces by using the Newton formulae recursively.
However, as pointed out earlier terms like $\tr(SDSD)$ can't be expressed directly in terms of the invariants of the rotational and expansionary quadratic forms
unless the basis diagonalising $[A^e]$ is used.

\subsection{Diagonal expansion of determinants}

Although the little-known result below \cite{Coll83} has not explicitly been cited so far I have used it extensively during the development of those
results presented here which use the decomposition $[A]_\bb=D+S,$ so that $B=S$ in what follows.\newline
Let $\cS:=\{1,\dots,m\},$ $\theta\subset \cS$ and $\bar{\theta}$ the complement of $\theta$ in $\cS.$ If $\mtt{B}$ is $m\times m$ then $\mtt{B}(\theta)$
is the matrix obtained from $\mtt{B}$ by removing the rows and columns not indexed in $\cS$ by $\theta.$ We take $\mtt{B}(\emptyset):=\mtt{I_1}.$

\begin{propn} (Collings 1983)
Let $\mtt{D}$ be an $m\times m$ diagonal matrix and let $\mtt{B}$ be an arbitrary $m\times m$ matrix. Then

\[
\det(\mtt{D}+\mtt{B})=\sum_{\theta\subset\cS}\det(\mtt{D}(\bar\theta)).\det(\mtt{B}(\theta)).
\]
\end{propn}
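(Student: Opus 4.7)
The plan is to start from the Leibniz formula for the determinant of $\mtt{D}+\mtt{B}$ and exploit the diagonal structure of $\mtt{D}$ together with the multilinearity hidden inside each diagonal factor. Writing $d_i:=\mtt{D}_{ii}$, one has
\[
\det(\mtt{D}+\mtt{B})=\sum_{\sigma\in S_m}\text{sgn}(\sigma)\prod_{i=1}^m\bigl(d_i\delta_{i,\sigma(i)}+\mtt{B}_{i,\sigma(i)}\bigr),
\]
and I would expand each of the $m$ binomial factors, indexing the resulting $2^m$ terms inside each permutation summand by the subset $\theta\subset\cS$ that records the positions at which the $\mtt{B}$-summand is selected (so $\bar\theta$ records the positions at which the $\mtt{D}$-summand is taken).

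The key observation is that at every position $i\in\bar\theta$ the chosen factor $d_i\delta_{i,\sigma(i)}$ vanishes unless $\sigma(i)=i$; hence only permutations $\sigma$ fixing every index of $\bar\theta$ survive. Such a $\sigma$ restricts to a permutation $\sigma'$ of $\theta$, and since the fixed points on $\bar\theta$ do not alter parity, $\text{sgn}(\sigma)=\text{sgn}(\sigma')$. Thus for fixed $\theta$ the contribution factorises as
\[
\Bigl(\prod_{i\in\bar\theta}d_i\Bigr)\sum_{\sigma'\in S_\theta}\text{sgn}(\sigma')\prod_{i\in\theta}\mtt{B}_{i,\sigma'(i)}=\det(\mtt{D}(\bar\theta))\cdot\det(\mtt{B}(\theta)),
\]
where the first factor uses that $\mtt{D}(\bar\theta)$ is diagonal, and the second is precisely the Leibniz formula for the principal submatrix $\mtt{B}(\theta)$. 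Summing over all $\theta\subset\cS$ yields the stated identity, with the conventions $\det(\mtt{B}(\emptyset))=\det(\mtt{I_1})=1$ handling the $\theta=\emptyset$ term and $\det(\mtt{D}(\emptyset))=1$ handling $\theta=\cS$.

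I do not expect any genuine obstacle: once the diagonal structure of $\mtt{D}$ is recognised as forcing surviving permutations to fix $\bar\theta$, the identity is essentially bookkeeping inside the Leibniz expansion, and the only minor care required is the sign check above, which is automatic. An equivalent but slightly more conceptual alternative would be to invoke row-multilinearity of the determinant directly, writing the $i$th row of $\mtt{D}+\mtt{B}$ as $d_i\mathbf{e}_i+\mathbf{b}_i$ and distributing over the $2^m$ resulting determinants; each surviving determinant is block-triangular after reordering rows and columns (the $\bar\theta$-block being $\diag(d_i)_{i\in\bar\theta}$), and so factors as $\det(\mtt{D}(\bar\theta))\cdot\det(\mtt{B}(\theta))$, recovering the same sum.
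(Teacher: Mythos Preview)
Your argument is correct. The paper does not actually supply a proof of this proposition; it is stated in the appendix with attribution to Collings (1983) and used as a tool, so there is nothing to compare against. Your Leibniz-expansion approach (and the equivalent row-multilinearity variant you sketch) is the standard way to establish this identity, and the one delicate point --- that permutations surviving for a given $\theta$ must fix $\bar\theta$ pointwise, whence the sign and the factorisation follow --- is handled correctly.
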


\end{appendix}

\section*{Acknowledgements}
Thanks particularly to Peter Stacey and also to Thoan Do, Khanh Pham and Peter Forrester for helpful discussions over many years.

\end{document}